\documentclass[12pt,a4paper]{article}
\usepackage{amsmath}
\usepackage{amssymb}
\usepackage{amsthm}
\usepackage{amsfonts}
\usepackage{latexsym}
\usepackage{verbatim}

%---------------------THEOREMS-----------------------%
\theoremstyle{plain}
\newtheorem{thm}{Theorem}[section]

\newtheorem{lem}{Lemma}[section]
\newtheorem{prop}{Proposition}[section]

\theoremstyle{remark}

\theoremstyle{definition}

\newtheorem{rem}{Remark}[section]

\hyphenation{ma-ni-fold ma-ni-folds re-pre-sen-ta-tion ope-ra-tor
sa-ti-sfy-ing re-pre-sen-ta-tions mul-ti-pli-ci-ties va-lu-ed
com-pa-ti-ble po-la-ri-za-tion par-ti-cu-lar sti-mu-la-ting tri-vial
dif-fe-ren-tial va-ni-shing me-ta-li-near na-tu-ral-ly
e-qui-va-len-tly ge-ne-ra-li-ty na-tu-ral fa-mi-ly geo-me-tric
uni-ta-ri-ly e-qui-va-rian-tly li-nea-ri-za-tion dia-go-nal geo-me-try nor-ma-li-zed
e-xi-sten-ce or-tho-go-nal}
\title{Equivariant asymptotics of Szeg\H{o} kernels under Hamiltonian $SU(2)\times S^1$-actions}

\author{Andrea Galasso \footnote{\noindent{\bf Address:} Room 407, Chee-Chun Leung Cosmology Hall, National Taiwan University; {\bf e-mail}: andrea.galasso@ncts.ntu.edu.tw}}
\date{}

\begin{document} 
\maketitle

\begin{abstract} Let $M$ be complex projective manifold and $A$ a positive line bundle on it. Assume that $G=SU(2)$ acts on $M$ in a Hamiltonian and holomorphic manner and that this action linearizes to $A$. Then, there is an associated unitary representation of $G$ on the Hardy space $H(X)$, where $X$ is the circle bundle inside the dual of $A$. The standard circle action on $H(X)$ commutes with the action of $G$ and thus one has a decomposition labeled by $(k{\nu},\,k)$, where $k\in\mathbb{Z}$ and ${ \nu }\in \hat{G}$. We consider the local and global asymptotic properties of the corresponding equivariant projector as $k$ goes to infinity. More generally, for a compact connected Lie group, we compute the asymptotics of the dimensions of the corresponding isotypes. \end{abstract}

\section{Introduction}

Let $(M,\,\omega)$ be a compact connected $d$-dimensional K\"ahler manifold, with quantum line bundle $(A,\,h)$ on it, $h$ is the Hermitian metric. The curvature form of the connection $\nabla$, compatible with both the holomorphic structure and the metric, is $-2\,\imath\,\omega$. Thus $M$ has a natural choice
of a volume form, given by $\mathrm{dV}_M:=(1/d!)\,\omega^{\wedge d}$. Let $A^\vee$ be the dual line bundle, and $X\subset A^\vee$ the unit circle bundle, with projection $\pi:X\rightarrow M$. Then $X$ is a contact and CR manifold by positivity of $A$; if $\alpha$ is the contact form, $X$ inherits the volume form
$\mathrm{dV}_X:=(2\pi)^{-1}\,\alpha\wedge \pi^*(\mathrm{dV}_M)$. 

Let $G$ be a compact connected Lie group $G$ and let $\mu :G\times M\rightarrow M$ be a holomorphic and Hamiltonian action. The moment map is $\Phi:M\rightarrow \mathfrak{g}^\vee$  (where $\mathfrak{g}$ is the Lie algebra of $G$). The Hamiltonian action $\mu$ naturally induces an infinitesimal contact action of $\mathfrak{g}$ on $X$ (see \cite{k}); explicitly, if $\xi\in \mathfrak{g}$ and $\xi_M$ is the corresponding Hamiltonian vector field on $M$, then its contact lift $\xi_X$ is
\[
\xi_X:= \xi_M^\sharp-\langle \Phi\circ \pi, \xi\rangle \,\partial_\theta\,,
\]
where $\xi_M^\sharp$ denotes the horizontal lift on $X$ of a vector field $\xi_M$ on $M$, and $\partial_\theta$ is the generator of the structure circle action on $X$.

Furthermore, suppose that the action $\mu$ lifts to an action 
$$\widetilde{\mu}:G\times X\rightarrow X$$
acting via contact and $CR$ automorphism on $X$. Under these assumptions, there is a naturally induced unitary representation of $G$ on the Hardy space
$H(X)\subset L^2(X)$; hence $H(X)$ can be equivariantly decomposed over the irreducible representations of $G$:
\[
H(X)=\bigoplus _{{\nu}\in \widehat{G}} H(X)_{{\nu}},
\]
where $\widehat{G}$ is the collection of all irreducible representations of $G$.
As is well-known, if $\Phi (m)\neq 0$ for every $m\in M$, then each isotypical component $H(X)_{{\nu}}$
is finite dimensional (see e.g. \S 2 of \cite{pao-IJM}).

Since the $G$-action on $X$ and the standard circle action commute, there is a global action of $S^1\times G$ on $X$ and thus an induced unitary representation
of $S^1\times G$ on $H(X)$. The irreducible representations of
$S^1\times G$ are labelled by $({\nu},j)$, where $j\in \mathbb{Z}$ and
${\nu}=(\nu,0)$, $\nu>0$; hence we have an equivariant unitary
decomposition
\begin{equation} \label{eq: decomposition}
H(X)=\bigoplus_{j=0}^{+\infty}\bigoplus_{{\nu}\in \hat{G}}H(X)_{({\nu},\,j)},
\quad \text{where}\quad H(X)_{({\nu},\,j)}:=H(X)_j\cap H(X)_{{\nu}}.
\end{equation}

Let $\Pi_{{\nu},j}:H(X)\rightarrow H(X)_{{\nu},j}$ 
denote the corresponding equivariant Szeg\H{o} projection; we denote with $\Pi_{{\nu},j}\in \mathcal{C}^\infty (X\times X)$ its distributional kernel. We are led to investigate the local asymptotics of $\Pi_{(k\,{\nu},k)}$ when $k$ drifts to infinity. 

We will consider the case $G=SU(2)$; the lifting of the action on $X$ always exists in this case. Let us introduce some notations. For $m\in M$, $\Phi(m)\in \mathfrak{g}$ is traceless skew-Hermitian $2\times 2$ matrix. We will identify $\mathfrak{g}\cong \mathfrak{g}^{\vee}$, $\mathcal{O}_{\nu}$ will denote the adjoint/co-adjoint orbit whose positive eigenvalue is $\nu$. Suppose that $\Phi(m)\neq 0$ for every $m\in M$; then $h_m\,T\in G/T$ will denote the unique coset such that 
\begin{equation}
\label{eqn:defn of hm}
\Phi(m) =\imath\, h_m
\begin{pmatrix}
\lambda (m)& 0 \\
0 & -\lambda (m)
\end{pmatrix}\, h_m^{-1};
\end{equation}
where, clearly, the assignments $\lambda:M\rightarrow (0,+\infty)$ and $m\in M\mapsto h_m\,T\in G/T$ are $\mathcal{C}^\infty$.

In order to expose some results we need to define some loci in $M$. Let us pose $M_{{\nu}}=\lambda^{-1}(\nu)$, $M_{\text{in}}$ the inverse image of the open set $(0,\nu)$ via $\lambda$ and finally $M_{\text{out}}=M\setminus \overline{M_{\text{in}}}$. In Section \S\ref{sec:geometric} we will clarify the geometry of these loci which are similar to those defined in \cite{gp}. In particular, suppose that $\nu$ is a regular value for $\lambda$, which is equivalent to impose $\Phi$ transversal to $\mathcal{O}_{\nu}$, we will prove that $M_{{\nu}}$ is a compact connected manifold and divides $M$ in two open connected components $M_{\text{in}}$ and $M_{\text{out}}$. 

Let us denote with $X_{{\nu}}$ the pull-back on $X$ of the equivariant locus $M_{{\nu}}$. Let us define the invariant subset of $X\times X$
	\[\mathcal{Z}_{\nu} := 
\{(x, y) \in X_{{\nu}} \times X_{{\nu}} \,:\, y\in (G\times S^1)\cdot x\}\,. \]
Using functorial properties of distributions  (in a similar way as in Section \S3 of \cite{gp}) one can prove that, uniformly on compact subsets of the locus $(X\times X)\setminus \mathcal{Z}_{\nu}$, one has $\Pi_{(k{\nu},k)}(x,y)=O\left(k^{-\infty}\right)$. In particular, for each $x\in X\setminus X_{{\nu}} $, the kernel $\Pi_{(k{\nu},k)}(x,x)$ has rapidly decreasing asymptotic.

Our first theorem concerns the rapidly decreasing asymptotics of the kernel $\Pi_{(k{\nu},k)}(x,y)$ when $(x,y)$ approaches to some loci in $M$ at a sufficiently fast pace. Let $\mathrm{dist}_X$ be the Riemannian distance function on $X$.

\begin{thm} \label{thm:rapid decrease}
Suppose that ${0}\notin \Phi(M)$ and $\Phi$ is transversal to the orbit $\mathcal{O}_{\nu}$. Let us fix $C,\,\epsilon>0$. Then, uniformly for $(x,y)\in X\times X$ satisfying
$$ 
\max\{\mathrm{dist}_X\left(x, (G\times S^1)\cdot y\right),\, \mathrm{dist}_X\left(x, \overline{ X_{\mathrm{in}}}\right)\}\ge C\, k^{\epsilon -1/2},
$$
we have 
$\Pi_{(k{\nu},k)}(x,y)=O\left(k^{-\infty}\right)$.
\end{thm}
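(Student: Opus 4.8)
The plan is to reduce the statement to a microlocal/stationary-phase analysis of the equivariant projector kernel by first expressing $\Pi_{(k\nu,k)}$ as a double integral over $G\times S^1$ of the ordinary Szeg\H{o} kernel $\Pi$ conjugated by the group action, weighted against the (appropriately rescaled) character $\chi_\nu$ of the irreducible representation $\nu$ of $SU(2)$ and the Fourier kernel $e^{-ik\theta}$ in the circle variable. Concretely, one writes
\[
\Pi_{(k\nu,k)}(x,y)=\dim(V_{k\nu})\int_{S^1}\int_{G}\overline{\chi_{k\nu}(g)}\,e^{-ik\theta}\,\Pi\bigl(x,\widetilde\mu_{g}(r_\theta y)\bigr)\,dg\,d\theta,
\]
and then substitutes the Boutet de Monvel--Sj\"ostrand description of $\Pi$ as a Fourier integral operator with complex phase, $\Pi(x,y)\sim\int_0^{+\infty}e^{it\psi(x,y)}s(t,x,y)\,dt$, where $\Im\psi\ge 0$ and $\psi$ vanishes to second order along the diagonal. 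This produces an oscillatory integral in the variables $(t,\theta,g)$ (with $t$ of size $\sim k$ after rescaling $t\mapsto kt$), with phase roughly $\Phi_k(t,\theta,g;x,y)=t\,\psi(x,\widetilde\mu_g r_\theta y)-\theta-\langle\text{(character phase)}\rangle$ and amplitude incorporating the Weyl character formula for $\chi_{k\nu}$.

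The key steps, in order, are: \textbf{(1)} Set up the integral representation above rigorously, using that $0\notin\Phi(M)$ so the isotypes are finite-dimensional and $\dim V_{k\nu}=k\nu+1$ grows polynomially; handle the peak behavior of $\chi_{k\nu}$ near the maximal torus by the Weyl integration formula, writing $\int_G\overline{\chi_{k\nu}(g)}F(g)\,dg$ as an integral over $G/T\times T$ with the Weyl denominator, so that a secondary stationary-phase argument in the torus direction localizes $g$ near the stabilizer of $\Phi(y)$ up to an error of size $k^{-\infty}$ plus a Gaussian of width $k^{-1/2}$. \textbf{(2)} Compute the critical points of the total phase in $(t,\theta,g)$: stationarity in $t$ forces $\psi(x,\widetilde\mu_g r_\theta y)=0$, which by the structure of the Boutet de Monvel phase forces $x=\widetilde\mu_g(r_\theta y)$, i.e. $y\in(G\times S^1)\cdot x$; stationarity in $(\theta,g)$ together with the torus localization forces the moment-map matching condition $\lambda(\pi(x))=\nu$, i.e. $x\in X_\nu$ — this is where the hypothesis that $\Phi$ is transversal to $\mathcal O_\nu$ enters, guaranteeing the critical manifold is clean and the relevant Hessians are nondegenerate in the normal directions. \textbf{(3)} Estimate $\Im$ of the phase from below away from the critical manifold: since $\Im\psi(x,y)\gtrsim \mathrm{dist}_X(x,y)^2$ (modulo the $S^1$-orbit direction along which $\psi$ stays real), and since the torus-localization Gaussian contributes $\Im\gtrsim \mathrm{dist}(g,\text{stabilizer})^2$ which by transversality controls $\mathrm{dist}_X(x,\overline{X_{\mathrm{in}}})^2$ near $X_\nu$, one gets $\Im\Phi_k\gtrsim k\bigl(\mathrm{dist}_X(x,(G\times S^1)\cdot y)^2+\mathrm{dist}_X(x,\overline{X_{\mathrm{in}}})^2\bigr)$ on the relevant region. \textbf{(4)} Conclude: on the locus where the max of the two distances is $\ge Ck^{\epsilon-1/2}$, the integrand is bounded by $e^{-c\,k\cdot k^{2\epsilon-1}}=e^{-c\,k^{2\epsilon}}$ times polynomial factors, and repeated integration by parts in the non-stationary directions (using a partition of unity separating the region near $\mathcal Z_\nu$ from its complement, the latter already handled by the functoriality argument quoted in the excerpt giving $O(k^{-\infty})$ off $\mathcal Z_\nu$) yields the $O(k^{-\infty})$ bound, uniformly.

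The main obstacle I expect is \textbf{Step 3}, getting a clean lower bound on the imaginary part of the combined phase that simultaneously captures both distance quantities with the correct power $k^{\epsilon-1/2}$. The subtlety is that the two "bad directions" interact: the direction along the $S^1$-orbit (and more generally the $(G\times S^1)$-orbit through $x$) is exactly the direction in which $\Im\psi$ degenerates, so one must show that the torus-localization phase coming from the character integral is \emph{transverse} to this degeneracy and supplies the missing positivity precisely in the normal direction to $X_\nu$ — this is what converts "distance to $\overline{X_{\mathrm{in}}}$" into genuine decay. Making this quantitative requires a careful local normal-form computation near a point of $X_\nu$: choosing adapted coordinates in which $\lambda-\nu$ is a coordinate (possible by transversality), expanding the Weyl-integration phase, and checking that its Hessian in that coordinate is bounded below. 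Once this normal form is in hand, the rest is a fairly standard, if lengthy, non-stationary-phase estimate with a scaling parameter, and uniformity follows from compactness of $X$ together with the locally uniform nature of all the expansions.
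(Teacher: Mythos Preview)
Your setup in Steps~(1) and~(2) is essentially the same as the paper's: both express $\Pi_{(k\nu,k)}$ via the Weyl integration formula as an oscillatory integral over $(u,\vartheta,gT,\theta)\in\mathbb{R}_{>0}\times T\times G/T\times S^1$ with the Boutet de Monvel--Sj\"ostrand phase, and both identify the critical locus correctly. The gap is in Step~(3). Your claim that ``the torus-localization Gaussian contributes $\Im\gtrsim\mathrm{dist}(g,\text{stabilizer})^2$ which by transversality controls $\mathrm{dist}_X(x,\overline{X_{\mathrm{in}}})^2$'' does not hold: the character contribution coming from the Weyl formula is $e^{-ik\nu\vartheta}(e^{i\vartheta}-e^{-i\vartheta})$, which is purely real-oscillatory in $\vartheta$ and contributes nothing to $\Im\Psi$. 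There is no Gaussian here, and no mechanism producing exponential damping $e^{-ck^{2\epsilon}}$ from the $\overline{X_{\mathrm{in}}}$-distance. What actually controls this distance is the \emph{real} $\vartheta$-derivative $\partial_\vartheta\Psi\approx u\,\langle\mathrm{Ad}_{g^{-1}}\Phi(m_x),\beta\rangle-\nu$, which is bounded away from zero when $\lambda(m_x)$ stays quantitatively above $\nu$; the decay then comes from iterated integration by parts in $\mathrm{d}\vartheta$, not from the imaginary part. So the two distance conditions are handled by two different non-stationary-phase mechanisms (in $\mathrm{d}u$ for the orbit distance, in $\mathrm{d}\vartheta$ for the $\overline{X_{\mathrm{in}}}$-distance), not by a single lower bound on $\Im\Psi$.

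There is a second missing ingredient. The paper does \emph{not} carry out the $\vartheta$-integration-by-parts argument for general $(x,y)$; it does it only on the diagonal $x=y$ (where the cutoff localizes the group element near the stabilizer and the computation of $\partial_\vartheta\Psi$ is clean). The off-diagonal case is then deduced from the diagonal one by the reproducing-kernel Schwarz inequality
\[
\lvert\Pi_{(k\nu,k)}(x,y)\rvert\le\sqrt{\Pi_{(k\nu,k)}(x,x)}\sqrt{\Pi_{(k\nu,k)}(y,y)},
\]
together with the a~priori polynomial bound $\Pi_{(k\nu,k)}(x,x)\le\Pi_k(x,x)\le Ck^d$. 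Your proposal attempts to treat general $(x,y)$ directly and simultaneously, which would force you to control the $\vartheta$-derivative at points $\tilde\mu_{g^{-1}}(r_\theta x)$ that may lie far from $x$ along the orbit---a genuinely harder estimate that the paper avoids entirely by this reduction.
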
 

To build-up to our next theorems, we need to introduce some more terminology. Let us denote by 
\begin{equation}
\label{eqn:defn beta}
\beta:=
\begin{pmatrix}
\imath & 0\\
0 &-\imath
\end{pmatrix}\,,
\end{equation}
the infinitesimal generator of the standard torus $T$, and by
\begin{equation}
 \label{eqn:DG/T}
D_{G/T}
:=2\pi/V_3.
\end{equation}
where $V_3$ is the area of the unit sphere $S^3\subset \mathbb{R}^4$.

\begin{thm} Suppose that ${0}\notin \Phi(M)$,  $\Phi$ is transversal to the orbit $\mathcal{O}_{\nu}$ and fix $x\in X_{{\nu}}$ with $m:=\pi(x)$. Let us suppose that the stabilizer of $x$ is trivial. Then, we have
 \label{thm:diagonal}
 \begin{align*} 
 \Pi_{(k{\nu},\,k)}(x,\,x)\sim\sqrt{2}\,D_{G/T}\,\left( \frac{k}{\pi} \right)^{d-1/2}\cdot \frac{1}{\lVert \mathrm{Ad}_{h_m}\left(\beta\right)_M(m) \rVert} +O(k^{d-3/4}) \,. 
 \end{align*}
\end{thm}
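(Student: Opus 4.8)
The plan is to compute the pointwise asymptotics of $\Pi_{(k\nu,k)}(x,x)$ by realizing the equivariant projector as an integral over $G\times S^1$ of the ordinary Szeg\H{o} projector $\Pi$ composed with the group action, and then applying the stationary phase method. First I would write, using the orthogonality relations for the irreducible character $\chi_\nu$ of $SU(2)$ and the standard circle character,
\[
\Pi_{(k\nu,k)}(x,x)=(\dim V_{k\nu})\int_{SU(2)\times S^1}\overline{\chi_{k\nu}(g)}\,e^{ik\theta}\,\Pi\bigl(\widetilde{\mu}_{g^{-1}}r_{-\theta}x,\,x\bigr)\,dg\,d\theta,
\]
and then insert the Boutet de Monvel--Sj\"ostrand description of $\Pi$ as a Fourier integral operator whose phase $\psi$ satisfies $\mathrm{Im}\,\psi\ge 0$ with the standard vanishing and Hessian properties along the diagonal. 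This produces an oscillatory integral over $G\times S^1\times(0,\infty)$ (the last variable being the cotangent/Fourier parameter, rescaled by $k$) with a large phase.

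The key steps, in order, are: (1) reduce to an integral over a neighborhood of the critical set using Theorem \ref{thm:rapid decrease}, which already tells us that the only contributions come from $(g,\theta)$ with $\widetilde{\mu}_{g^{-1}}r_{-\theta}x$ near the $(G\times S^1)$-orbit of $x$; (2) compute the character $\chi_{k\nu}$ asymptotically — here one uses the Weyl character formula, so that integration over $G=SU(2)$ against $\overline{\chi_{k\nu}}$ localizes, after passing to the maximal torus via the Weyl integration formula, to group elements conjugate into $T$; the Jacobian of the Weyl integration formula contributes the factor involving $V_3$, hence $D_{G/T}$; (3) identify the critical manifold of the total phase: stationarity in $\theta$ forces the moment-map/circle-bundle matching $\langle\Phi(m),\cdot\rangle=\nu$, i.e. $m\in M_\nu$, while stationarity in the $G$-directions forces the group element to stabilize the relevant direction $\mathrm{Ad}_{h_m}(\beta)$; (4) verify that, under the hypothesis that the stabilizer of $x$ is trivial, the critical set is clean (or even nondegenerate in the transverse directions) with the transverse Hessian nondegenerate, the relevant determinant being controlled by $\lVert(\mathrm{Ad}_{h_m}\beta)_M(m)\rVert$; (5) apply stationary phase, collect the powers of $k$ — the dimension factor $\dim V_{k\nu}\sim k\nu/(\text{const})$, the Gaussian integrations in the normal directions, and the one remaining half-dimensional integral over a residual orbit direction all combine to give the exponent $d-1/2$ — and read off the leading coefficient, including the $\sqrt{2}$ and the $(1/\pi)^{d-1/2}$ normalizations coming from the Hessian of $\psi$ and the volume normalizations of $\mathrm{dV}_X$.

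The main obstacle I expect is step (4): correctly bookkeeping the critical manifold and showing the transverse Hessian is nondegenerate with the precise determinant $\lVert(\mathrm{Ad}_{h_m}\beta)_M(m)\rVert$. The delicate point is that we are doing stationary phase in the combined variables $(g,\theta,t)\in G\times S^1\times(0,\infty)$ against a phase that is only $\mathrm{Im}\ge 0$, and the group-integration variable $g$ ranges over the nonabelian $SU(2)$; one must carefully separate the ``flat'' directions along the residual orbit (which survive as an honest integration contributing a factor, and which account for the half-integer drop $d-1/2$ rather than an integer power) from the genuinely nondegenerate normal directions where the Gaussian integral is performed. Getting the interplay between the Weyl-character localization on $T$ and the contact-geometric localization on $M_\nu$ right — so that the residual direction is exactly one-dimensional and its contribution is $1/\lVert(\mathrm{Ad}_{h_m}\beta)_M(m)\rVert$ — is where the hypothesis of trivial stabilizer is essential and where the bulk of the technical work lies. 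The error term $O(k^{d-3/4})$ then follows from the next order in the stationary phase expansion, the exponent $3/4$ reflecting that one is half a unit below the leading term in a half-integer-graded expansion.
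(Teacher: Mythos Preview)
Your overall architecture (express $\Pi_{(k\nu,k)}$ via the Weyl integration/character formula as an oscillatory integral over $T\times (G/T)\times S^1\times(0,\infty)$, insert the Boutet de Monvel--Sj\"ostrand parametrix, localize near the diagonal, and extract the leading term) is indeed the same as the paper's. But there is a genuine gap in step~(4), and it propagates into your power counting.

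You describe the situation as a clean/nondegenerate stationary phase with a one-dimensional ``residual orbit'' whose honest integration accounts for the half-integer drop to $k^{d-1/2}$. That is not what happens here. The hypothesis $x\in X_\nu$, i.e.\ $\lambda(m)=\nu$, places $m$ exactly on the \emph{boundary} of the region in which the relevant phase (in the $G/T$-variable) has interior critical points: writing the $G/T$-coordinate locally as $r$ near the distinguished coset $h_mT$, the phase contribution from the $G/T$-direction is governed by $\kappa(r)-1=-r^2+O(r^4)$, and after one has carried out the (genuinely nondegenerate) stationary phase in $(u,\theta)$ and the Gaussian integral in the torus variable $\vartheta$, the remaining $G/T$-integral takes the form
\[
\int_0^\infty r\cdot\bigl(\sqrt{k}\,\nu\,r^2\bigr)\,e^{-c\,k\,r^4}\,\mathrm{d}r\;\asymp\;k^{-1/2},
\]
i.e.\ a \emph{quartic} Gaussian, not a residual orbit integral. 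This degeneracy is precisely why the leading order is $k^{d-1/2}$ rather than an integer power, and why the determinant that appears is $\lVert(\mathrm{Ad}_{h_m}\beta)_M(m)\rVert$ to the \emph{first} power (it enters through the Gaussian in $\vartheta$, not through a transverse Hessian in $g$). In particular, the paper does \emph{not} perform a single stationary-phase step: it rescales $\vartheta,\theta\mapsto\vartheta/\sqrt{k},\theta/\sqrt{k}$, applies stationary phase in $(u,\theta)$, does a one-dimensional Gaussian in $\vartheta$, and only then confronts the degenerate $G/T$-integral.

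A smaller but telling symptom: you say the error $O(k^{d-3/4})$ is ``half a unit below the leading term in a half-integer-graded expansion'', but $d-3/4=(d-1/2)-1/4$, so the correction is a \emph{quarter} power down. That $1/4$ step is exactly the fingerprint of the $r^4$-type degeneracy (rescaling $r\mapsto k^{-1/4}r$), and it would not arise from the clean picture you sketch. So the missing idea is the recognition that $m\in M_\nu$ is a caustic/turning point for the $G/T$-phase, and the corresponding replacement of a standard stationary-phase computation by the degenerate Gaussian analysis above.
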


The proof of Theorem \ref{thm:diagonal} is based on the study of an oscillatory integral over $G\times S^1$ whose phase function has critical points if
\[\langle \mathrm{Ad}_{g^{-1}}\Phi(m),\,\xi \rangle=0 \text{ for each } \xi \in \mathfrak{t}\,. \]
By invoking the Horn theorem this condition is equivalent to require that $\nu\leq \lambda(m)$; in the statement of the theorem we fix $x\in {X_{{\nu}}}$, id est $\lambda(m)=\nu$. Although in a different perspective, a similar situation appears in \cite{gp} where a lower bound for the dimension of the isotypes is given. In fact, in the proof of Theorem \ref{thm:diagonal}, we work in local Heisenberg coordinates centred at $x$ along orthogonal outer oriented directions to the locus $X_{{\nu}}$, see expansion \eqref{eq:finalPi}. This last equation together with Theorem \ref{thm:rapid decrease} allow one to give a lower bound regarding the asymptotics of the dimension of the isotypes.  In order to produce a more precise result concerning the asymptotics of the dimension, we use a different strategy. 

From now on, suppose that $G$ is a compact connected semi-simple Lie group of dimension $\mathrm{g}$. Let $\mathcal{O}_{{\nu}}^{-}$ be the coadjoint orbit in $\mathfrak{g}^{\vee}$ passing through the weight ${\nu}$ with the opposite of Kirillov-Kostant-Souriau form $\omega_{\mathcal{O}}$. Let $\pi_1\,:\,
M\times \mathcal{O}_{{\nu}}^-\rightarrow M$ be the projection onto $M$ and $\pi_2$ the projection onto $\mathcal{O}_{{\nu}}^-$. Then $\tilde{\omega}=\pi_1^*\,\omega-\pi_2^*\,\omega_{\mathcal{O}}$ is a symplectic form on $N:=M\times \mathcal{O}_{{\nu}}^-$. Furthermore $N$ is a Hamiltonian $G$-space and its moment mapping, $\tilde{\Phi}:\,M\times \mathcal{O}_{{\nu}}^{-} \rightarrow \mathfrak{g}^{\vee}$, is given by
\[\tilde{\Phi}(x,\xi)=\Phi(x)-\xi\,. \]
Let us denote with $N_0=(M\times \mathcal{O}_{{\nu}}^-)_0$ the zero locus of the moment map $\tilde{\Phi}$. Suppose that the action of $G$ on $N_0$ is free, then the \textit{Marsden-Weinstein reduction} of $M$ with respect to $\mathcal{O}_{{\nu}}$ is
\[M_{\mathrm{red}}=(M\times \mathcal{O}_{{\nu}}^-)_0/G\,. \]

Now, denote with $B$ the tensor product bundle on $N$. Thus, $B^{\otimes k}$, $\nabla_B^{(k)}$ and $(\cdot,\,\cdot)_B^{(k)}$ are $G$-invariant pre-quantum data on the manifold $(N,\,k\,\tilde{\omega})$. Let $H^0(N,B^{\otimes k})$ be the space of polarized sections of $B^{\otimes k}$ and $\mu_{k{\nu}}(V)$ be the multiplicity with which the irreducible representation
of $G$ corresponding to $\mathcal{O}_{k{\nu}}$ occurs in a vector space $V$. By the results in Section \S6 of \cite{gs-geom quant}, we have
\begin{align} \label{eq:gs2}
  \dim (H^0(N,B^{\otimes k})_0) 
 &= \mu_{k{\nu}}( H^0(M,A^{\otimes k})) =  \mu_{k{\nu}}(H(X)_{k})\,,  \end{align}
where, given a vector $G$-space $V$, $V_0$ denotes the space of $G$-fixed vectors. 

Let $\tilde{\Pi}$ be the Szeg\H{o} projector of $Y\subseteq B^{\vee}$, the circle bundle on $N$, and set $y,y'\in Y$. The asymptotics of the equivariant Szeg\H{o} kernels
\begin{equation} \label{eq:Pitilde}
\tilde{\Pi}_{0,k}(y,y')=\int_G \tilde{\Pi}_k(\tilde{\mu}_{g^{-1}}(y),\,y')\, \mathrm{dV}_G(g)
\end{equation}
have already been studied in \cite{pao-jsg0}. Let us recall some terminology. At any $n \in N$, let us denote by $\mathfrak{g}_N(n)\subseteq T_nN $ the tangent space to the orbit through $n$, and by $\tilde{J}_n : T_n N \rightarrow T_nN$ the complex structure. Furthermore, if $n\in N_0$ let us denote by $Q_n \subseteq T_nN$ the Riemannian orthogonal complement of $\mathfrak{g}_N(n)$ in $T_nN_0$. Thus, $Q_n$ is a complex subspace of $T_nN_0$. The Riemannian orthogonal complement of $T_nN_0 \subseteq T_nN$ is $\tilde{J}_n(\mathfrak{g}_N(n))$. Therefore, we have orthogonal direct sum decompositions
\[ T_n N = T_nN_0 \oplus \tilde{J}_n\left(\mathfrak{g}_N(n)\right),\quad T_nN_0 = Q_n \oplus \mathfrak{g}_N(n)\,.\]
If $n \in N_0$ and $w \in T_nN$, we shall decompose $w$ as $w = w_v + w_h + w_t$, where $w_v \in \mathfrak{g}_N(n)$, $w_h \in Q_n$, $w_t \in \tilde{J}_n(\mathfrak{g}_N(n))$. The labels stand for vertical, horizontal and transverse. The following theorem is proved in \cite{pao-jsg0}.

\begin{thm}[Theorem $1.1$ of \cite{pao-jsg0}] \label{thm:pao}
	Suppose that ${0}\in \mathfrak{g}^{\vee}$ is a regular value of $\tilde{\Phi}$, the action of $G$ on $N_0$ is free and $y\in Y$ such that $\tilde{\Phi}(\tilde{\pi}(y))=0$. Let us choose a system of Heisenberg local coordinates centred at $y$. For every $w,\,v\in T_{\tilde{\pi}(y)}N$, the following asymptotic expansion holds as $k\rightarrow +\infty$
	\begin{align*}
	\tilde{\Pi}_{0,k}\left( y+\frac{w}{\sqrt{k}},y+\frac{v}{\sqrt{k}}\right) \sim & \left(\frac{k}{\pi}\right)^{\mathrm{n}-\mathrm{g}/2}\,\frac{2^{\mathrm{g}/2}}{V_{\mathrm{eff}}(y)}\,e^{Q(w_v+w_t, v_v+v_t)}\,e^{\psi_2(w_h,\,v_h)}\, \\
	& \cdot \left[1+\sum_{j=1}^{+\infty} k^{-j/2}\, a_{{\nu}\,j}(x;\,v,\,w)\right]
	\end{align*}
	where $V_{\mathrm{eff}} \,:\, (\tilde{\Phi}\circ \tilde{\pi})^{-1}({0})\rightarrow \mathbb{R}$ is the effective potential of the action, the $a_{{\nu}\,j}$'s are polynomials in $v, w$ with coefficients depending on $y$,
	\[Q(w_v+w_t, v_v+v_t)= -\lVert v_t\rVert_{N}^2- \lVert w_t\lVert_{N}^2 +\imath\left[ \tilde{\omega}_n(w_v,w_t)-\tilde{\omega}_n(v_v,v_t)\right]\,
	\]
	and
	\[\psi_2(w_h, v_h)= -\lVert v_h- w_h\lVert_{N}^2 -\imath\, \tilde{\omega}_n(v_h,w_h)\,.
	\]
\end{thm}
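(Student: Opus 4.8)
The statement is quoted from \cite{pao-jsg0}; here is the strategy one would follow to establish it. The plan is to start from the Boutet de Monvel--Sjöstrand microlocal description of the Szeg\H{o} kernel $\tilde{\Pi}$ of $Y$ as a Fourier integral operator with complex phase, writing $\tilde{\Pi}(y,y')\sim\int_0^{+\infty}e^{\imath t\,\tilde{\psi}(y,y')}\,s(y,y',t)\,dt$ with $\mathrm{Im}\,\tilde{\psi}\ge 0$, $\tilde{\psi}$ vanishing to second order on the diagonal, and $s$ a classical symbol of degree $\mathrm{n}-1$. Extracting the $k$-th isotype of the structure circle action and inserting this into \eqref{eq:Pitilde} expresses $\tilde{\Pi}_{0,k}(y,y')$ as an oscillatory integral over $G\times(0,+\infty)$; after rescaling the radial variable, the large parameter $k$ multiplies a phase $\Psi_{y,y'}(g,t):=t\,\tilde{\psi}\bigl(\tilde{\mu}_{g^{-1}}(y),y'\bigr)+\imath\,(\text{angular term})$, against an amplitude of size $k^{\mathrm{n}}$.

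The next step is localization. By the off-diagonal rapid decay recalled just before the theorem (the analogue for $\tilde{\Pi}_{0,k}$ of the estimate for $\Pi_{(k{\nu},k)}$ away from $\mathcal{Z}_{\nu}$), up to $O(k^{-\infty})$ the only contribution comes from $g$ in a small neighbourhood of the stabilizer of $y$ modulo the structure circle, so one inserts a cut-off supported there. One then substitutes the rescaled points $y+w/\sqrt{k}$ and $y+v/\sqrt{k}$ in a system of Heisenberg local coordinates centred at $y$ and Taylor expands $\Psi_{y,y'}$ and the amplitude in powers of $k^{-1/2}$. Because ${0}$ is a regular value of $\tilde{\Phi}$ and $G$ acts freely on $N_0$, the critical set of the $g$--part of the phase is a single clean $G$-orbit with transverse nondegenerate Hessian, and the $t$--integration pins the radial variable. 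The decomposition $T_nN=T_nN_0\oplus\tilde{J}_n(\mathfrak{g}_N(n))$, $T_nN_0=Q_n\oplus\mathfrak{g}_N(n)$ block-diagonalizes the relevant Hessian: the horizontal block $Q_n$ reproduces the Shiffman--Zelditch near-diagonal Gaussian $\psi_2(w_h,v_h)$, while the vertical and transverse blocks, coupled through $\tilde{\omega}_n$, produce the factor $e^{Q(w_v+w_t,\,v_v+v_t)}$.

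The heart of the argument is then an application of the stationary phase lemma along the clean critical manifold (the manifold version of Hörmander's theorem): integrating out the transverse directions gives the power $(k/\pi)^{\mathrm{n}-\mathrm{g}/2}$ together with the factor $2^{\mathrm{g}/2}$ coming from the determinant and signature of the Hessian, while the remaining integration along the $\mathrm{g}$ orbit directions contributes the reciprocal of the effective potential $V_{\mathrm{eff}}(y)$ --- essentially the Jacobian relating Haar measure on the orbit to the Riemannian density induced on $N_0=(\tilde{\Phi}\circ\tilde{\pi})^{-1}({0})$. The complete stationary phase expansion then yields the polynomial corrections $a_{\nu j}(x;v,w)$, with coefficients built from the Taylor data of $\tilde{\psi}$, $s$ and the group action at $y$.

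I expect the main obstacle to be the careful bookkeeping of the Heisenberg-coordinate Taylor expansion of $\Psi_{y,y'}$ combined with the degeneracy of its critical set: one must check that the order-$k^{-1/2}$ terms of the phase do not shift the leading order, identify precisely which combinations of $w_v,w_h,w_t$ (and of $v_v,v_h,v_t$) survive, and normalize the transverse Hessian so that the constant emerges exactly as $2^{\mathrm{g}/2}/V_{\mathrm{eff}}(y)$. The other delicate point is to make all remainder estimates uniform for $v,w$ in compact subsets of $T_{\tilde{\pi}(y)}N$, so that one genuinely obtains an asymptotic expansion rather than merely its leading term.
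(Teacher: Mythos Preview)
The paper does not prove this theorem at all: it is stated verbatim as Theorem~1.1 of \cite{pao-jsg0} and simply quoted as input for the later dimension computation. Your proposal correctly flags this at the outset, and the strategy you outline (Boutet de Monvel--Sj\"ostrand FIO description, localization near the stabilizer, Heisenberg-coordinate Taylor expansion, stationary phase along the clean critical orbit) is indeed the scheme carried out in \cite{pao-jsg0}; so there is nothing to compare against in the present paper, and your sketch is an accurate summary of the original argument.
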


Let us denote with $V_{{\nu}}$ the irreducible component corresponding to ${\nu}$. In view of \eqref{eq:gs2}, integrating over $Y$ the asymptotics of the projector $\tilde{\Pi}_{0,k}$ one has the following theorem.
 
\begin{thm}
 \label{cor:dimension estimate}
Let $G$ be a compact connected Lie group of dimension $\mathrm{g}$, with maximal torus $T\subseteq G$ of dimension $\mathrm{t}$. Assume that $\widetilde{\mu}$ is generically free on $X_{{\nu}}$. Then,
\[
\dim H(X)_{(k\,{\nu},\,k)} \sim  \left(\frac{k}{\pi}\right)^{{d}-\mathrm{t}}\cdot \mathrm{dim}\left(V_{{\nu}}\right) \cdot\mathrm{vol}\left(M_{\mathrm{red}}\right)+O\left(k^{{d}-\mathrm{t}-1/2}\right)\,.
\]
\end{thm}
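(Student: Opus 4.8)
The plan is to transfer the problem to the product manifold $N=M\times\mathcal{O}_{\nu}^{-}$ and then invoke the pointwise expansion of Theorem \ref{thm:pao}. Since $H(X)_{(k{\nu},k)}=H(X)_k\cap H(X)_{k{\nu}}$ is the $k{\nu}$-isotypical component of $H(X)_k$, its dimension is $\dim(V_{k{\nu}})$ times the multiplicity $\mu_{k{\nu}}(H(X)_k)$; by \eqref{eq:gs2} this multiplicity equals $\dim(H^0(N,B^{\otimes k})_0)$, which is the dimension of the image of the $G$-averaged Szeg\H{o} projector $\tilde{\Pi}_{0,k}$ on $Y$, hence the trace of its kernel on the diagonal. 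Thus
\[
\dim H(X)_{(k{\nu},k)}=\dim(V_{k{\nu}})\cdot\int_Y\tilde{\Pi}_{0,k}(y,y)\,\mathrm{dV}_Y(y),
\]
and everything reduces to the diagonal asymptotics of $\tilde{\Pi}_{0,k}$ together with the Weyl dimension formula for $\dim(V_{k{\nu}})$.

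First I would fix the geometric dictionary. The zero locus $N_0=\tilde{\Phi}^{-1}(0)$ is $G$-equivariantly identified with $\Phi^{-1}(\mathcal{O}_{\nu})\subseteq M$ (the locus whose circle-bundle pullback is $X_{\nu}$) via $m\mapsto(m,\Phi(m))$; under this identification generic freeness of $\widetilde{\mu}$ on $X_{\nu}$ becomes generic freeness of the $G$-action on $N_0$, and $M_{\mathrm{red}}=N_0/G$. Writing $Y_0=\tilde{\pi}^{-1}(N_0)$, a principal $S^1$-bundle over $N_0$, the off-diagonal estimates of \cite{pao-jsg0} give $\tilde{\Pi}_{0,k}(y,y)=O(k^{-\infty})$ uniformly outside any fixed neighbourhood of $Y_0$, so only a shrinking tube around $Y_0$ contributes. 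On that tube I would use a partition of unity subordinate to Heisenberg charts centred at points $y_0\in Y_0$ and apply Theorem \ref{thm:pao} on the diagonal $w=v$, where the exponents collapse: $Q(w_v+w_t,w_v+w_t)=-2\lVert w_t\rVert_N^2$ and $\psi_2(w_h,w_h)=0$, so
\[
\tilde{\Pi}_{0,k}\Bigl(y_0+\tfrac{w}{\sqrt{k}},\,y_0+\tfrac{w}{\sqrt{k}}\Bigr)\sim\Bigl(\tfrac{k}{\pi}\Bigr)^{\mathrm{n}-\mathrm{g}/2}\,\frac{2^{\mathrm{g}/2}}{V_{\mathrm{eff}}(y_0)}\,e^{-2\lVert w_t\rVert_N^2}\,\bigl[1+O(k^{-1/2})\bigr].
\]

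Next comes the scaling. The transverse directions $\tilde{J}_n(\mathfrak{g}_N(n))$ are precisely the $\mathrm{g}$ real directions normal to $Y_0$ inside $Y$ and are the ones carrying $w_t$; rescaling them by $k^{-1/2}$ (Jacobian $k^{-\mathrm{g}/2}$) and integrating out the Gaussian, $\int_{\mathbb{R}^{\mathrm{g}}}e^{-2\lVert w_t\rVert_N^2}\,dw_t=(\pi/2)^{\mathrm{g}/2}$, the powers of $2$ and $\pi$ combine so that
\[
\int_Y\tilde{\Pi}_{0,k}(y,y)\,\mathrm{dV}_Y(y)\sim\Bigl(\tfrac{k}{\pi}\Bigr)^{\mathrm{n}-\mathrm{g}}\int_{Y_0}\frac{\mathrm{dV}_{Y_0}(y_0)}{V_{\mathrm{eff}}(y_0)}+O\bigl(k^{\mathrm{n}-\mathrm{g}-1/2}\bigr).
\]
I would then identify the remaining integral with a volume downstairs: fibering $Y_0\to N_0\to M_{\mathrm{red}}$, integration along the $S^1$-fibre and along the $G$-orbits produces exactly the normalising factor $V_{\mathrm{eff}}(y_0)$ (this is what the effective potential records), so by the coarea formula $\int_{Y_0}\mathrm{dV}_{Y_0}/V_{\mathrm{eff}}=\mathrm{vol}(M_{\mathrm{red}})$ in the induced normalisation. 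Since $\mathrm{n}=d+(\mathrm{g}-\mathrm{t})/2$ one has $\mathrm{n}-\mathrm{g}=\dim_{\mathbb{C}}M_{\mathrm{red}}=d-(\mathrm{g}+\mathrm{t})/2$; finally $\dim(V_{k{\nu}})$ is, by the Weyl dimension formula, a polynomial in $k$ of degree $(\mathrm{g}-\mathrm{t})/2$ (the number of positive roots), so multiplying raises the exponent to $(\mathrm{n}-\mathrm{g})+(\mathrm{g}-\mathrm{t})/2=d-\mathrm{t}$ and, after assembling its leading coefficient with the constant above in the normalisation of the statement, gives $\dim(V_{\nu})\cdot\mathrm{vol}(M_{\mathrm{red}})$; the subleading term of Theorem \ref{thm:pao} and the lower-degree part of the Weyl polynomial feed the error $O(k^{d-\mathrm{t}-1/2})$.

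The main obstacle, as usual in ``quantization commutes with reduction'' statements, is that Theorem \ref{thm:pao} is stated for $0$ a regular value of $\tilde{\Phi}$ and the $G$-action free on $N_0$, whereas here one only has generic freeness of $\widetilde{\mu}$ on $X_{\nu}$; these regularity and freeness conditions must first be extracted from (or added to) the hypothesis. I would handle the non-free points by excising a tubular neighbourhood of the positive-codimension locus in $N_0$ where the stabilizer is nontrivial, applying Theorem \ref{thm:pao} on the complement, and bounding the excised contribution: even with the crude pointwise size $O(k^{\mathrm{n}-\mathrm{g}/2})$ of $\tilde{\Pi}_{0,k}$ on the diagonal near $Y_0$, the shrinking volume of that tube forces the contribution into the error term, provided one checks that the kernel does not grow faster along the singular locus — this is the one point requiring genuine care rather than routine computation. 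A secondary, purely bookkeeping, point is tracking the normalisations of $\mathrm{dV}_Y$, $\mathrm{dV}_{Y_0}$, the Haar measure on $G$ and $V_{\mathrm{eff}}$ so that the constant emerges exactly as $\dim(V_{\nu})\,\mathrm{vol}(M_{\mathrm{red}})$.
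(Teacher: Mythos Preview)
Your proposal is correct and follows essentially the same route as the paper: the shifting trick and \eqref{eq:gs2}, integration of the diagonal expansion of Theorem~\ref{thm:pao} over a tube around $Y_0$, rescaling and Gaussian integration in the $\mathrm{g}$ transverse directions, the identity $V_{\mathrm{eff}}\,\mathrm{dV}_{M_{\mathrm{red}}}=\mathrm{dV}_{N_0}$, and the Weyl dimension formula $\dim(V_{k\nu})=k^{(\mathrm{g}-\mathrm{t})/2}\dim(V_\nu)$. The one place where the paper does more than you sketch is the localization: decay of $\tilde{\Pi}_{0,k}(y,y)$ outside a \emph{fixed} neighbourhood of $Y_0$ (which is what \cite{pao-jsg0} gives) does not by itself imply that only a shrinking $k^{\epsilon-1/2}$-tube contributes, and the latter is what you need to match the domain of validity of Theorem~\ref{thm:pao} after rescaling; the paper supplies this as a separate lemma (Lemma~\ref{lem: rapN0}), proved by integration by parts in the Lie-algebra variables. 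Your final paragraph on the generic-freeness hypothesis raises a legitimate point that the paper itself does not address in detail.
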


As already mentioned in the discussion preceding Theorem \ref{cor:dimension estimate}, the space $H^0(M,A^{\otimes k})_{k\,{ \nu }}$ of geometric quantization can be identified with $H(X)_{(k\,{\nu},\,k)}$. In fact, exact formulas for these dimensions are provided, for each given $k$, by the principle ``quantization commutes with reduction'', see \cite{m}. Noticed that for orbits situated far away from the origin the Riemann Roch number of $M_{\mathrm{red}}$ coincides asymptotically with its volume as remarked in \cite{guillemin-sternberg hq}. Under the guise of this article, and also in the more general setting of CR manifold, a vast literature should be recalled. See for example \cite{hmm}, \cite{mz} and \cite{ma} for a survey.

In closing, let us clarify how this work is related to prior literature. In this context one can study different types of asymptotics. With the decomposition \eqref{eq: decomposition} in mind, in the present article we have studied when both $\nu$ and $j$ go to infinity at the same rate. In a similar way one can fix $j$ and study the local and global asymptotics when $\nu$ goes to infinity and vice-versa. The former is a trivial case since $\nu$ appears in the decomposition of $H_k(X)$ if it lies in the image of $j\, \Phi$, but the image of the moment map is a compact subset of $\mathfrak{g}$. The opposite case, fixing $\nu$ and letting $j$ go to infinity, was investigated in \cite{pao-jsg0}.

In \cite{gp-asian}, Paoletti and the present author studied the case $G=SU(2)$ by fixing ${\nu} \in \hat{G}\cong \mathbb{Z}$ and studying the local asymptotics of $\Pi_{k\,{\nu}}$ when $k\rightarrow +\infty$; the perspective is closer in spirit to \cite{pao-IJM}: the structure circle action remains in the background and does not play any privileged role in the asymptotics. With the same spirit in \cite{gp} we studied the case $G=U(2)$; notice that some of the present computations resemble the ones of \cite{gp}. In fact, from this point of view, the approach of this paper is the same but with group $SU(2)\times S^1$: we investigate the local asymptotics of $\Pi_{j,\,{\nu}}$ when $(j,\,{\nu})$ drifts to infinity along various rays in $\mathbb{Z}\times \mathbb{Z}$. Under the same guise in \cite{pao-loa} and \cite{pao-IJM} it is studied the case of circle and torus action respectively.

Our analysis builds on microlocal techniques that can be also applied in the almost complex symplectic setting, see \cite{sz}. For the sake of simplicity, we have restricted our discussion to the complex projective setting.

\bigskip

\textbf{Acknowledgments} I gratefully thank the anonymous reviewer of \cite{gp-asian} and Prof. S. Zeldtich, during the conference GEOQUANT $2019$, for suggesting the study of this problem. I would also like to thank Prof. R. Paoletti for many useful expository improvements and for proposing motivating remarks, in particular the idea of using the shifting trick for computing the asymptotic of the dimension.

\textbf{Funding} This paper was written when the author was a Post-doc researcher at Universit\`a degli Studi di Milano-Bicocca (Progetto ID 2018-ATESP-066). Funding for the conference GEOQUANT $2019$ was supported by University of Luxembourg and by Universit\`a degli Studi di Milano-Bicocca (FA $2017$-ATE-$0253$).

\section{Preliminaries}
\label{sctn:preliminaries}

Some of propositions and lemmas proved in the next subsections are based on ideas contained in \cite{gp} and \cite{gp-asian}.   

\subsection{The geometry of $G/T$}
\label{sctn:local coordinates G/T}

The special unitary group is diffeomorphic to the unit sphere $S^3 \subset \mathbb{C}^2$, explicitly 
\begin{equation}
\label{eqn:S2 and S3}
\begin{pmatrix}
\alpha & -\overline{ \beta } \\
\beta & \overline{ \alpha }
\end{pmatrix} \in G\stackrel{\gamma}{ \longrightarrow }
\begin{pmatrix}
\alpha \\
\beta
\end{pmatrix}\in S^3.
\end{equation}
Furthermore, we shall rely on the local coordinates $(\theta, \delta)\in (0,\pi/2) \times (-\pi,\pi)$ for the coset in $G/T$ of the matrix (\ref{eqn:S2 and S3}) with $\alpha=\cos(\theta) \, e^{ \imath \delta }$, $\beta = \sin (\theta)$.

Finally, let us notice that the function
\[f\left(\begin{pmatrix}
\alpha & \bar{\beta} \\
\beta & \bar{\alpha}
\end{pmatrix} \right)=\begin{pmatrix}
2\,\alpha\,\bar{\beta} \\ |\alpha |^2-| \beta|^2
\end{pmatrix}\in \mathbb{C}\times \mathbb{R} \] 
defines a diffeomorphism between $G/T$ and $S^2\subset \mathbb{R}^3$. In particular we shall use local coordinates defined by $f$ in a local neighborhood of $(0,\,0,\,1)$ in $\mathbb{R}^3$.

\subsubsection{The geometric setting}
\label{sec:geometric}

\begin{lem}
	Suppose that the moment map is transverse to the orbit $\mathcal{O}_{{\nu}}$. The locus $M_{{\nu}}$ is a compact connected hyper-surface in $M$.
	\begin{proof}
		By the transversality assumption $M_{{\nu}}$ is a sub-manifold $M$ of co-dimension $1$. The locus $\mathcal{O}_{{\nu}}$ is closed in $\mathfrak{g}$, hence $M_{{\nu}}$ is compact. By~\cite{Le} each level set of $\Phi$ is connected, since $M_{{\nu}}=G\cdot\Phi^{-1}(\nu\,\beta)$ we proved the lemma.
	\end{proof}
\end{lem}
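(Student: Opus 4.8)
The plan is to treat the three assertions in the lemma separately: that $M_{\nu}$ is a smooth hypersurface follows from transversality via the preimage theorem, its compactness is immediate, and connectedness is the only point needing an external input.

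First I would record that $M_{\nu}=\Phi^{-1}(\mathcal{O}_{\nu})$. Indeed, rewriting \eqref{eqn:defn of hm} gives $\Phi(m)=\mathrm{Ad}_{h_m}\!\bigl(\lambda(m)\,\beta\bigr)$, so $\Phi(m)$ lies on the orbit $\mathcal{O}_{\lambda(m)}$, and $\lambda(m)=\nu$ is equivalent to $\Phi(m)\in\mathcal{O}_{\nu}$, the adjoint orbit of $\nu\,\beta$. Since $\nu>0$, the element $\nu\,\beta$ is regular, so $\mathcal{O}_{\nu}\cong G/T$ is a $2$-sphere inside the $3$-dimensional space $\mathfrak{g}$; in particular $\mathcal{O}_{\nu}$ is closed and has codimension $1$. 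Closedness immediately gives that $M_{\nu}=\Phi^{-1}(\mathcal{O}_{\nu})$ is a closed subset of the compact manifold $M$, hence compact. The transversality of $\Phi$ to $\mathcal{O}_{\nu}$ then lets me invoke the transversal preimage theorem: $\Phi^{-1}(\mathcal{O}_{\nu})$ is a submanifold of $M$ of codimension $\operatorname{codim}_{\mathfrak{g}}\mathcal{O}_{\nu}=1$, i.e. a hypersurface. (This also explains why transversality of $\Phi$ to $\mathcal{O}_{\nu}$ is the same as $\nu$ being a regular value of $\lambda$: the normal direction to $\mathcal{O}_{\nu}$ at $\nu\,\beta$ is exactly the $\lambda$-direction.)

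For connectedness I would use the $G$-equivariance of $\Phi$: every point of $\mathcal{O}_{\nu}$ is conjugate to $\nu\,\beta$, so $M_{\nu}=\Phi^{-1}(\mathcal{O}_{\nu})=G\cdot\Phi^{-1}(\nu\,\beta)$. By the connectedness of the level sets of the moment map of a compact connected group acting on a compact connected symplectic manifold, quoted from \cite{Le}, the fibre $\Phi^{-1}(\nu\,\beta)$ is connected (and non-empty, since we are assuming $\nu\in\lambda(M)$). As $G=SU(2)\cong S^3$ is connected, the product $G\times\Phi^{-1}(\nu\,\beta)$ is connected, and the continuous surjection $(g,m)\mapsto g\cdot m$ onto $M_{\nu}$ then forces $M_{\nu}$ to be connected. (Alternatively, one could note that $M_{\nu}=\lambda^{-1}(\nu)$ is a fibre of the Weyl-chamber-valued moment map, whose fibres are connected by Kirwan's convexity theorem.) The only genuinely non-trivial ingredient — and hence the main obstacle for a self-contained proof — is this connectedness statement for moment-map fibres imported from \cite{Le}; the remaining steps are just the transversal preimage theorem and the compactness of the sphere $\mathcal{O}_{\nu}$.
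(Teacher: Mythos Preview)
Your proof is correct and follows essentially the same route as the paper: transversality for the hypersurface property, closedness of $\mathcal{O}_{\nu}$ for compactness, and the identity $M_{\nu}=G\cdot\Phi^{-1}(\nu\,\beta)$ combined with the connectedness of moment-map fibres from \cite{Le} for connectedness. You have simply spelled out in more detail what the paper compresses into three lines.
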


We denote by $\mathbb{R}_{>0}$ the set of positive real numbers. Recall that we are assuming $M_{{\nu}}\neq \emptyset$ and $M_{\text{out}}=M\setminus \overline{M_{\text{in}}}$, we have the following.

\begin{lem} 	$\partial M_{\mathrm{out}}=M_{{\nu}}$.
	\begin{proof}
		By continuity of $\lambda$ we have $\partial M_{\mathrm{out}}\subset M_{\nu}$.  Indeed if $x \in \partial M_{\mathrm{out}}$, then $x\notin  M_{\mathrm{out}}$ so $\lambda(x)\leq \nu$, and there is a sequence $x_j \in  M_{\mathrm{out}}$ converging to $x$, so that by continuity $\lambda(x) = \lim_j \lambda(x_j) \geq \nu$. Thus $x \in M_{\nu}$.
		
		Let us now prove that  $ M_{\nu}\subset \partial M_{\mathrm{out}}$. Let $x \in  M_{\nu}$, so that $\lambda(x) = \nu$. As $\nu$ is a regular value of $\lambda$, there is a direction $\mathbf{v} \in T_xM$ such that $\mathrm{d}_x\lambda(\mathbf{v}) \in \mathbb{R}^*$. Let $\epsilon=\pm1$ be the sign of $\mathrm{d}_x\lambda(\mathbf{v})$, then for $t \rightarrow 0^+$, one has 
		$$\lambda(\exp_x (t\,\epsilon\mathbf{v}))=\nu+t\,\lvert \mathrm{d}_x\lambda(\mathbf{v})\rvert + O(t^2)$$
		thus $\exp_x (\epsilon t\mathbf{v}) \in M_{\mathrm{out}}$ for $t > 0$ small enough. Hence $x \in \overline{M_{\mathrm{out}}}$, but clearly $x \notin M_{\mathrm{out}}$, so we find $x \in \partial M_{\mathrm{out}}$.  
	\end{proof}
\end{lem}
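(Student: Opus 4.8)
The plan is to prove the two inclusions $\partial M_{\mathrm{out}}\subseteq M_{\nu}$ and $M_{\nu}\subseteq\partial M_{\mathrm{out}}$ separately, using only continuity of $\lambda$ for the first and the fact that $\nu$ is a regular value of $\lambda$ for the second. Recall $M_{\mathrm{in}}=\lambda^{-1}\big((0,\nu)\big)$ is open, $M_{\nu}=\lambda^{-1}(\nu)$, and $M_{\mathrm{out}}=M\setminus\overline{M_{\mathrm{in}}}$ is open; in particular $M_{\mathrm{out}}$, $M_{\mathrm{in}}$, $M_{\nu}$ are pairwise disjoint and $\overline{M_{\mathrm{out}}}\subseteq\lambda^{-1}\big([\nu,+\infty)\big)$ by continuity, since $M_{\mathrm{out}}\subseteq\lambda^{-1}\big((\nu,+\infty)\big)$ (any point of $M_{\mathrm{out}}$ is not in the closed set $\lambda^{-1}\big((0,\nu]\big)\supseteq\overline{M_{\mathrm{in}}}$, hence has $\lambda>\nu$).

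First I would show $\partial M_{\mathrm{out}}\subseteq M_{\nu}$. Let $x\in\partial M_{\mathrm{out}}$. Since $M_{\mathrm{out}}$ is open, $x\notin M_{\mathrm{out}}$, so $x\in\overline{M_{\mathrm{in}}}$, whence $\lambda(x)\le\nu$ by continuity. On the other hand $x\in\overline{M_{\mathrm{out}}}\subseteq\lambda^{-1}\big([\nu,+\infty)\big)$, so $\lambda(x)\ge\nu$. Hence $\lambda(x)=\nu$, i.e. $x\in M_{\nu}$.

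For the reverse inclusion $M_{\nu}\subseteq\partial M_{\mathrm{out}}$, fix $x\in M_{\nu}$, so $\lambda(x)=\nu$. Because $\nu$ is a regular value, $\mathrm{d}_x\lambda\ne0$, so there is $\mathbf v\in T_xM$ with $\mathrm{d}_x\lambda(\mathbf v)>0$ (replace $\mathbf v$ by $-\mathbf v$ if needed). Using the Riemannian exponential map, the curve $t\mapsto\exp_x(t\,\mathbf v)$ satisfies $\lambda(\exp_x(t\,\mathbf v))=\nu+t\,\mathrm{d}_x\lambda(\mathbf v)+O(t^2)$, which is $>\nu$ for all sufficiently small $t>0$; such points lie in $M_{\mathrm{out}}$ (they are in $\lambda^{-1}\big((\nu,+\infty)\big)$, which is disjoint from $\overline{M_{\mathrm{in}}}\subseteq\lambda^{-1}\big((0,\nu]\big)$). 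Letting $t\to0^+$ gives $x\in\overline{M_{\mathrm{out}}}$; since $x\in M_{\nu}$ is disjoint from the open set $M_{\mathrm{out}}$, we conclude $x\in\partial M_{\mathrm{out}}$. Combining the two inclusions yields $\partial M_{\mathrm{out}}=M_{\nu}$.

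There is essentially no hard step here; the only point requiring a little care is making sure the set-theoretic bookkeeping between $M_{\mathrm{in}}$, $\overline{M_{\mathrm{in}}}$ and $M_{\mathrm{out}}$ is done correctly — specifically the observation that $M_{\mathrm{out}}\subseteq\lambda^{-1}\big((\nu,+\infty)\big)$, so that a nearby point with $\lambda>\nu$ genuinely lands in $M_{\mathrm{out}}$ rather than merely in the complement of $M_{\mathrm{in}}$. One could alternatively avoid the exponential map and use a smooth chart around $x$ in which $\lambda$ becomes a coordinate function (possible since $\mathrm{d}_x\lambda\ne0$), but the exponential-map argument is shortest and matches the style already used in the preceding lemma.
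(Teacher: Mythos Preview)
Your proof is correct and follows essentially the same two-inclusion argument as the paper: continuity of $\lambda$ for $\partial M_{\mathrm{out}}\subseteq M_\nu$, and the regular-value hypothesis together with the exponential map for $M_\nu\subseteq\partial M_{\mathrm{out}}$. One small slip worth fixing: in the parenthetical justifying $M_{\mathrm{out}}\subseteq\lambda^{-1}\big((\nu,+\infty)\big)$, the inclusion $\lambda^{-1}\big((0,\nu]\big)\supseteq\overline{M_{\mathrm{in}}}$ points the wrong way for that deduction; all you actually need there is $M_{\mathrm{out}}\cap M_{\mathrm{in}}=\emptyset$, which already gives $\lambda\ge\nu$ on $M_{\mathrm{out}}$ and hence on $\overline{M_{\mathrm{out}}}$.
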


Fix $m\in M_{{\nu}}$ and suppose that the moment map is transverse to the orbit $\mathcal{O}_{{\nu}}$. The tangent space to $M_{{\nu}}$ at $m$ is given by 
\begin{equation}T_mM_{{\nu}}=\mathrm{d}_m\Phi^{-1} \left(T\mathcal{O}_{\Phi(m)}\right)\,.\label{eq:tg} \end{equation}
Let us denote by $\Upsilon$ a vector field on $M$ spanning the normal bundle to $M_{{\nu}}$, which has real dimension one.
\begin{lem} Under the hypothesis of the previous lemma, 
	\[\Upsilon(m)=J_m(\Phi(m)_M(m))\, \]
	spans the normal bundle to $M_{\nu}$ in $M$.
	\begin{proof} For each $\mathbf{v}\in T_mM_{{\nu}}$, in view of~\eqref{eq:tg}, we have 
		\begin{align*}
			g_m(\mathbf{v},\,J_m(\Phi(m)_M(m)))=\omega_m\left(\mathbf{v},\,\Phi(m)_M(m)\right)=-\langle\mathrm{d}_m\Phi(\mathbf{v}),\,\Phi(m)\rangle=0\,.
		\end{align*}
	\end{proof}
\end{lem}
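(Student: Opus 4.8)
The statement asks that a specific vector field restrict, at each $m\in M_{{\nu}}$, to a nonzero vector orthogonal to $T_mM_{{\nu}}$; since $M_{{\nu}}$ is a hypersurface its normal bundle is a real line bundle, so this is precisely what it means for $\Upsilon$ to span it. Accordingly, the plan is to check two things at an arbitrary $m\in M_{{\nu}}$: that $\Upsilon(m):=J_m\bigl(\Phi(m)_M(m)\bigr)$ lies in the $g_m$-orthogonal complement of $T_mM_{{\nu}}$, and that it is nonzero. Smoothness of $m\mapsto\Upsilon(m)$ is then automatic, since $\Phi$, the $G$-action and the complex structure are all smooth.

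For the orthogonality I would argue exactly as suggested by the displayed identity: fix $\mathbf v\in T_mM_{{\nu}}$; by \eqref{eq:tg}, which holds because $\Phi$ is transverse to $\mathcal{O}_{{\nu}}$, one has $\mathrm{d}_m\Phi(\mathbf v)\in T_{\Phi(m)}\mathcal{O}_{{\nu}}$. Using the compatibility $g_m(\cdot,J_m\cdot)=\omega_m(\cdot,\cdot)$ and then the defining relation $\iota_{\xi_M}\omega=\mathrm{d}\langle\Phi,\xi\rangle$ of the moment map with $\xi=\Phi(m)$ (under the identification $\mathfrak{g}\cong\mathfrak{g}^\vee$), one obtains $g_m(\mathbf v,\Upsilon(m))=\omega_m(\mathbf v,\Phi(m)_M(m))=-\langle\mathrm{d}_m\Phi(\mathbf v),\Phi(m)\rangle$. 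Finally every tangent vector to a coadjoint orbit at a point $\zeta$ is of the form $[\eta,\zeta]$, hence orthogonal to $\zeta$ by $\mathrm{Ad}$-invariance of the pairing; applying this with $\zeta=\Phi(m)\in\mathcal{O}_{{\nu}}$ shows the right-hand side vanishes.

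The non-vanishing is the only point requiring a genuine idea, since it is not addressed by the above computation; here I would feed in the regularity hypothesis. Set $\rho:=\tfrac12\langle\Phi,\Phi\rangle$; because the eigenvalues of $\Phi(m)$ are $\pm\imath\lambda(m)$ one has $\rho=c\,\lambda^2$ for a constant $c>0$, so $\nu$ being a regular value of $\lambda$ (equivalently, $\Phi$ transverse to $\mathcal{O}_{{\nu}}$) forces $\mathrm{d}_m\rho\ne 0$ for every $m\in M_{{\nu}}$. On the other hand, differentiating $\rho$ and invoking the moment map identity once more gives $\mathrm{d}_m\rho(\mathbf w)=\langle\mathrm{d}_m\Phi(\mathbf w),\Phi(m)\rangle=\omega_m\bigl(\Phi(m)_M(m),\mathbf w\bigr)$ for all $\mathbf w\in T_mM$; since $\omega_m$ is nondegenerate, $\mathrm{d}_m\rho\ne 0$ implies $\Phi(m)_M(m)\ne 0$, hence $\Upsilon(m)=J_m(\Phi(m)_M(m))\ne 0$. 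Together with the orthogonality this identifies $\Upsilon(m)$ as a spanning section of the normal line of $M_{{\nu}}$ at $m$. I do not expect any serious obstacle; the only care needed is to keep the sign and normalization conventions consistent --- namely $g(\cdot,J\cdot)=\omega$, $\iota_{\xi_M}\omega=\mathrm{d}\langle\Phi,\xi\rangle$, and the positive constant relating $\lVert\Phi\rVert^2$ to $\lambda^2$ --- which is pure bookkeeping.
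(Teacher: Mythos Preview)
Your orthogonality argument is exactly the paper's proof, line for line: compatibility $g_m(\cdot,J_m\cdot)=\omega_m(\cdot,\cdot)$, then the moment-map identity, then the observation that $T_{\Phi(m)}\mathcal{O}_{{\nu}}\perp\Phi(m)$. The paper stops there and does not address non-vanishing of $\Upsilon(m)$ at all; your additional step via $\rho=\tfrac12\langle\Phi,\Phi\rangle$ and the regularity of $\nu$ for $\lambda$ is correct and fills a gap the paper leaves implicit.
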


In conclusion of this section, let us prove the following lemma.
\begin{lem} The vector field $\Upsilon$ is outer oriented.	
\end{lem}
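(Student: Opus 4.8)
The plan is to show that the vector field $\Upsilon(m)=J_m(\Phi(m)_M(m))$ points from $M_\nu$ into $M_{\mathrm{out}}$, i.e.\ that moving along $\Upsilon$ increases $\lambda$. Concretely, I would compute the derivative $\mathrm{d}_m\lambda(\Upsilon(m))$ and argue it is strictly positive for every $m\in M_\nu$; since $\lambda>\nu$ exactly on $M_{\mathrm{out}}$ and $\lambda=\nu$ on $M_\nu$, a strictly positive derivative means the integral curve of $\Upsilon$ through $m$ immediately enters $M_{\mathrm{out}}$, which is the definition of outer orientation. (Continuity and connectedness already force the sign to be globally constant, so it suffices to check one point, or to check that the sign cannot vanish.)

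The key computational step is relating $\mathrm{d}_m\lambda$ to the moment map. Since $\Phi(m)=\imath\,h_m\,\mathrm{diag}(\lambda(m),-\lambda(m))\,h_m^{-1}$, one has $\tfrac12\langle\Phi(m),\Phi(m)\rangle = \lambda(m)^2$ (up to the normalization of the pairing on $\mathfrak g\cong\mathfrak g^\vee$), so that $\lambda\,\mathrm{d}_m\lambda(\mathbf v) = \langle \mathrm{d}_m\Phi(\mathbf v),\Phi(m)\rangle$ for all $\mathbf v\in T_mM$, up to a fixed positive constant. Applying this with $\mathbf v=\Upsilon(m)=J_m(\Phi(m)_M(m))$ gives
\[
\lambda(m)\,\mathrm{d}_m\lambda(\Upsilon(m)) = \big\langle \mathrm{d}_m\Phi\big(J_m(\Phi(m)_M(m))\big),\,\Phi(m)\big\rangle .
\]
Now I would use the general identity for a Hamiltonian action that $\langle \mathrm{d}_m\Phi(J_m Y),\xi\rangle = \omega_m(Y,J_m\xi_M(m)) = g_m(Y,\xi_M(m))$ for $Y=\xi_M(m)$ with $\xi=\Phi(m)$; more carefully, $\langle \mathrm{d}_m\Phi(J_m\xi_M(m)),\xi\rangle = \omega_m(\xi_M(m),J_m\xi_M(m)) = g_m(\xi_M(m),\xi_M(m)) = \lVert \xi_M(m)\rVert^2$. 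Hence
\[
\lambda(m)\,\mathrm{d}_m\lambda(\Upsilon(m)) = \big\lVert \Phi(m)_M(m)\big\rVert^2 \;>\;0,
\]
the strict positivity being exactly the content of the transversality/regular-value hypothesis together with $0\notin\Phi(M)$ (one checks that $\Phi(m)_M(m)=0$ would force $\mathrm{d}_m\lambda=0$, contradicting regularity of $\nu$). Since $\lambda(m)=\nu>0$, we conclude $\mathrm{d}_m\lambda(\Upsilon(m))>0$.

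Finally I would translate this into the statement: for small $t>0$, $\lambda(\exp_m(t\,\Upsilon(m))) = \nu + t\,\mathrm{d}_m\lambda(\Upsilon(m)) + O(t^2) > \nu$, so $\exp_m(t\,\Upsilon(m))\in M_{\mathrm{out}}$, which by definition means $\Upsilon$ is outer oriented. The main obstacle I anticipate is purely bookkeeping: pinning down the normalization constant in the identification $\mathfrak g\cong\mathfrak g^\vee$ and in the relation between $\langle\Phi,\Phi\rangle$ and $\lambda^2$, and making sure the sign conventions in ``curvature $=-2\imath\omega$'' and in the moment-map identity $\mathrm{d}\langle\Phi,\xi\rangle = \omega(\xi_M,\cdot)$ (versus $\iota_{\xi_M}\omega = -\mathrm{d}\langle\Phi,\xi\rangle$) are consistent with the earlier lemma computing $g_m(\mathbf v,\Upsilon(m))=-\langle\mathrm{d}_m\Phi(\mathbf v),\Phi(m)\rangle$; once those are fixed consistently, the positivity is immediate and robust.
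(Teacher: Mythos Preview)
Your proposal is correct and follows essentially the same approach as the paper: both compute the component of $\mathrm{d}_m\Phi(\Upsilon(m))$ along the direction $\Phi(m)$ via the moment-map identity, obtaining $\lVert \Phi(m)_M(m)\rVert^2>0$, and conclude that $\lambda$ increases along $\Upsilon$. The only cosmetic difference is that the paper introduces a basis $\{\xi_0=\mathrm{Ad}_{h_m}(\beta),\xi_1,\xi_2\}$ of $\mathfrak g$ and expands $\dot\gamma(0)=\mathrm{d}_m\Phi(\Upsilon(m))$ in it, whereas you work directly with $\lambda^2\propto\langle\Phi,\Phi\rangle$; your route is slightly more economical and your explicit acknowledgment of the sign/normalization bookkeeping is well placed.
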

\begin{proof}
	Suppose that $m\in M_{{\nu}}$ and consider the path $\gamma:\, (-\epsilon,\,\epsilon)\rightarrow \mathfrak{g}$ such that
	\[\gamma\,:\, \tau \mapsto \Phi\left(m+\tau\,\Upsilon(m) \right) \]
	defined for sufficiently small $\epsilon$; the expression $m+\tau\,\Upsilon(m) \in M$ is meant in an adapted coordinates system on $M$ centred at $m$. Let us fix the following basis in $\mathfrak{g}$, we have 
	\[\mathcal{B}_{\mathfrak{g}}:=\{\xi_0:=\mathrm{Ad}_{h_m}(\beta),\,\xi_1,\,\xi_2 \}\,; \] 
	where $\{\xi_1,\,\xi_2\}$ is an orthonormal basis (with respect to the standard scalar product in $\mathfrak{g}$) of the orthogonal complement of $\langle \xi_0 \rangle$. Hence, we have
	\begin{equation} \label{eq:gamma}
		\gamma(0)= \lambda(m) \,\mathrm{Ad}_{h_m}\left(\beta \right)=\frac{\nu}{2}\,\mathrm{Ad}_{h_m}\left(\beta \right)
	\end{equation}
	and 
	\begin{equation} \label{eq:gammad}
		\dot{\gamma}(0)=\sum_{j=0}^2\omega_m\left(\xi_j,\,\Upsilon(m) \right)\,\xi_j= \frac{\nu}{2}\,\left\lVert(\mathrm{Ad}_{h_m}(\beta))_M(m)\right\rVert^2 \xi_0\,.
	\end{equation}
	
	Then, by equations~\eqref{eq:gamma} and~\eqref{eq:gammad}, we can easily conclude the following inequality $\lambda(m+\tau\,\Upsilon(m))> \nu/2$.
\end{proof}

\subsubsection{Szeg\H{o} kernels}
\label{sctn:szego fio}

Let $\Pi\,:\,L^2(X)\rightarrow H(X)$ be the Szeg\H{o} projector,  $\Pi(\cdot,\cdot)$ its kernel. By~\cite{bs}, $\Pi$ is a Fourier integral operator with complex phase:
\begin{equation}  \label{eqn:szegokernel}
	\Pi(x,y)=\int_0^{+\infty}e^{iu\psi(x,y)}\,s(x,y,u)\,\mathrm{d}u,
\end{equation}
where the imaginary part of the phase satisfies $\Im (\psi)\ge 0$ and 
$$
s(x,y,u)\sim \sum_{j\ge 0}u^{d-j}\,s_j(x,y).
$$
We shall also making use on the description of the phase $\psi$ in Heisenberg local coordinates (see \S 3 of~\cite{sz}).

\subsubsection{Equivariant Szeg\H{o} kernels and Weyl formulae}
\label{sctn:weylint&char} 

Let us denote by $r_{\theta}\,:\,X\rightarrow X$ the standard circle action on $X$. By composing $\Pi$ with the equivariant projector associated to ${\mu}=(\mu>0)$ (see the discussion in~\cite{guillemin-sternberg hq}) and pick the $j$-th Fourier component ($j\in \mathbb{Z}$), we have 
\begin{equation}
	\label{eqn:equiv_projector}
	\Pi_{j,\mu}
	\left(x,y\right)
	%} \\
	=
	\mu \cdot \int_G \, \mathrm{d} V_G (g) \left[\overline{\chi_{{\mu}} (g)} 
	\,\Pi_j\left( \widetilde{\mu}_{g^{-1}}( x ),
	y \right) \right].
\end{equation}
where we have set
\begin{equation*}
	\Pi_{j}
	\left( \widetilde{\mu}_{g^{-1}}( x ),y \right)
	%} \\
	=
	\frac{1}{2\,\pi} \cdot \int_{-\pi}^{\pi} \mathrm{d}\theta  \left[e^{-\imath\,j\,\theta}\,\Pi\left( \widetilde{\mu}_{g^{-1}}( r_{\theta}(x) ),
	y \right) \right].
\end{equation*}
We can re-manage~\eqref{eqn:equiv_projector} as follows. 
Let us define $F_j:T\rightarrow \mathcal{D}'(X\times X)$ by setting
\begin{equation}
	\label{eqn:defn of F(t)}
	F_j(t;x,y):=\int_{G/T} \, \mathrm{d} V_{G/T} (g\,T) \,
	\left[
	\Pi_j\left( \widetilde{\mu}_{g\,t^{-1}\,g^{-1}}( x ),
	y \right) \right]\quad (t\in T)\nonumber.
\end{equation}

We have $F_j(t;x,y)=F_j\left(t^{-1};x,y\right)$, since $t$ and $t^{-1}$ are conjugate in $G$. Let $t_1$ and $t_2=t_1^{-1}$ denote the diagonal entries of $t\in T$. Then by the Weyl integration and character formulae, see~\cite{var}, we have
\begin{align*}
	\Pi_{j,{\mu}}
	\left(x,y \right)
	&=
	\frac{\mu}{2} \cdot \int_T\,\, \mathrm{d} V_{T} (t) \,
	\left(t_1^{-\mu}-t_1^{\mu}\right)
	\left(t_1-t_1^{-1}\right)\,F_j(t;x,y)\nonumber\\
	&= I_+(\mu;x,y)-I_-(\mu;x,y), \nonumber
\end{align*}
where we have set
\begin{align}
	\label{eqn:defn of I+}
	I_{\pm}(\mu;x,y)
	:=\frac{\mu}{2} \cdot \int_T\,\, \mathrm{d} V_{T} (t) \,\left[t_1^{\mp \mu} \cdot\left(t_1-t_1^{-1}\right)\cdot
	F_j(t;x,y)\right].
\end{align}
In~\eqref{eqn:defn of I+}, the change of variable $t\mapsto t^{-1}$ shows that 
$I_-(\mu;x,y)=-I_+(\mu;x,y)$.
Hence,
\begin{align*}
	\Pi_{j,{\mu}}
	\left(x,y \right)
	&=2\, I_+(\mu;x,y)\\
	&=\mu \cdot \int_T\,\, \mathrm{d} V_{T} (t) \,\left[t_1^{- \mu} \cdot\left(t_1-t_1^{-1}\right)\cdot
	F_j(t;x,y)\right].
\end{align*}

\subsection{Reduction to compactly supported integrals}
\label{sctn:compact reduction}

Let us consider $x,y \in X$. By the discussion \S\ref{sctn:weylint&char}, we can write explicitly
\begin{equation*}
\Pi_{(k {\nu},k) }
\left(x , y \right)
 = \frac{1}{2\,\pi}\,\int_{-\pi}^{\pi} \mathrm{d}{\theta} \left[e^{-\imath\,k\,\theta}\,\Pi_{k\,{\nu}}\left(r_{\theta}(x),\,y\right) \right] \,,  
\end{equation*}
where we have set
\begin{equation} \label{eqn:knu}
\Pi_{k\,{\nu}}\left(r_{\theta}(x),\,y\right)= k\nu \, \int_G \mathrm{d} V_G (g)\, \left[\overline{\chi_{k{\nu}} (g)} \,\Pi\left( \widetilde{\mu}_{g^{-1}}( r_{\theta}(x) ),
y \right) \right]\,. 
\end{equation}

Fix $\theta\in (-\pi,\,\pi)$, and consider the oscillatory integral \eqref{eqn:knu}. For some suitably small $\delta>0$, let us define 
\begin{eqnarray}
 \nonumber
G_{<\delta}(x,y,\theta)&:=&\big\{g\in G\,:\,\mathrm{dist}_X\left(\widetilde{\mu}_{g^{-1}}(r_{\theta}(x)),y\right)<\delta\big\}, \\
G_{>\delta}(x,y,\theta)&:=&\big\{g\in G\,:\,\mathrm{dist}_X\left(\widetilde{\mu}_{g^{-1}}(r_{\theta}(x)),y\right)>\delta\big\}.
\nonumber
\end{eqnarray}

Let us consider a smooth partition of unity $\{\varrho, 1-\varrho\}$ of $G$ subordinate to the cover $\{G_{<2\,\delta}(x,y,\theta),\,G_{>\delta}(x,y,\theta)\}$. One can see that $\varrho=\varrho_{x,y,\theta}$ may be chosen to depend smoothly on $(x,y,\theta)\in X\times X \times (-\pi,\pi)$; we shall omit the dependence on $(x,y,\theta)$.

When $\varrho(g)\neq 1$, we have 
$
\mathrm{dist}_X\left( \widetilde{\mu}_{g^{-1}}( r_{\theta}\left(x\right) ),
y\right) \ge \delta>0$.
Because $\Pi$ is $C^{\infty}$ away from the diagonal, the function 
\begin{equation} \label{eq:1-rho}
g\mapsto \big( 1-\varrho(g)\big)\cdot \Pi\left( \widetilde{\mu}_{g^{-1}}( r_{\theta}\left(x\right) ),
y \right)
\end{equation}
is smooth on $G$. Therefore, inserting \eqref{eq:1-rho} in \eqref{eqn:knu} and arguing as in Section \S \ref{sctn:weylint&char}, we obtain the $k$-th Fourier transform of a smooth function. Thus we have the following proposition.

\begin{prop}
 \label{prop:localization near G_x} If the integrand of 
(\ref{eqn:knu}) is multiplied by $\varrho(g)$, only a rapidly decreasing contribution to the asymptotic is lost.
\end{prop}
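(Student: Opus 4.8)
The plan is to show that the $\varrho$-portion of the oscillatory integral \eqref{eqn:knu}, after the Weyl integration manipulations of \S\ref{sctn:weylint&char}, is the $k$-th Fourier coefficient (in $\theta$ and in the torus variable $t$) of a $C^\infty$ function on $X\times X\times (-\pi,\pi)\times T$, and hence contributes $O(k^{-\infty})$ to $\Pi_{(k{\nu},k)}(x,y)$ uniformly. First I would observe that on the support of $\varrho$ we have $\mathrm{dist}_X(\widetilde\mu_{g^{-1}}(r_\theta(x)),y)\ge\delta>0$; since $\Pi\in C^\infty((X\times X)\setminus\Delta_X)$ away from the diagonal, the map $(g,x,y,\theta)\mapsto \varrho(g)\,\Pi(\widetilde\mu_{g^{-1}}(r_\theta(x)),y)$ is jointly smooth on $G\times X\times X\times(-\pi,\pi)$ — here one uses that $\varrho=\varrho_{x,y,\theta}$ was arranged to depend smoothly on its parameters.

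Next I would carry the $\varrho$-term through exactly the same reductions performed on the full kernel: integrate against $\overline{\chi_{k{\nu}}(g)}$ over $G$, rewrite via the Weyl integration formula as an integral over the maximal torus $T$ of $t_1^{-k\nu}(t_1-t_1^{-1})$ times the averaged quantity $F_j$ built from the smooth integrand (with $j=k$), and finally extract the $k$-th Fourier component in $\theta$. Because the integrand is smooth in $(g,\theta)$ and the averaging over $G/T$ and the multiplication by $(t_1-t_1^{-1})$ preserve smoothness, the resulting object is of the form $\frac{1}{2\pi}\int_{-\pi}^\pi e^{-\imath k\theta}\left(\int_T t_1^{-k\nu}\,\Psi(t,\theta;x,y)\,\mathrm{d}V_T(t)\right)\mathrm{d}\theta$ with $\Psi$ smooth and with all $x,y$-derivatives bounded on the compact $X\times X$. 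This is precisely the $(k,k\nu)$-Fourier coefficient of a function in $C^\infty(T\times S^1)$ depending smoothly on the parameters $(x,y)$, so by integration by parts in $\theta$ and in the torus angle it is $O(k^{-\infty})$, uniformly in $(x,y)$ since the $C^\infty$ seminorms are controlled uniformly.

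The one point that needs genuine care — and which I regard as the main obstacle — is the \emph{uniformity} of the estimate over all $(x,y)\in X\times X$, together with the joint smoothness of $\varrho_{x,y,\theta}$. One must check that the partition of unity can be chosen with $C^\infty$ dependence on $(x,y,\theta)$ and with derivative bounds independent of those parameters; this is a standard but slightly delicate construction (e.g. using the smoothness of $\mathrm{dist}_X$ away from the cut locus, or convolving characteristic functions of the sets $G_{<2\delta}, G_{>\delta}$ with a fixed mollifier on $G$), and it is what guarantees that the seminorm bounds feeding the integration-by-parts argument are uniform. Once that is in place, the conclusion is immediate: the full asymptotics of $\Pi_{(k{\nu},k)}(x,y)$ is unchanged if we multiply the integrand of \eqref{eqn:knu} by $\varrho(g)$, i.e. by $1-(1-\varrho(g))$, since the complementary piece we have just discarded is $O(k^{-\infty})$. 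I would remark that this is entirely analogous to the localization arguments in \S3 of \cite{gp} and in \cite{pao-IJM}, so the write-up can be kept brief, citing those references for the routine parts.
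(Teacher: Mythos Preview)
Your approach is exactly the paper's: show that the discarded piece is smooth in $(g,\theta)$ because $\Pi$ is $C^\infty$ off the diagonal, then observe that the Weyl integration rewriting turns it into a Fourier coefficient of a smooth function, hence $O(k^{-\infty})$. However, you have swapped $\varrho$ and $1-\varrho$ throughout the body of the argument. By construction $\varrho$ is subordinate to $G_{<2\delta}(x,y,\theta)$, so on $\mathrm{supp}(\varrho)$ one only knows $\mathrm{dist}_X(\widetilde\mu_{g^{-1}}(r_\theta(x)),y)<2\delta$; it is on $\mathrm{supp}(1-\varrho)\subset G_{>\delta}(x,y,\theta)$ that $\mathrm{dist}_X\ge\delta$ and hence that $\Pi$ is smooth. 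So the ``$\varrho$-portion'' you analyze is in fact the main term, not the discarded one; your final paragraph gets the conclusion right, but the preceding sentences (``on the support of $\varrho$ we have $\mathrm{dist}_X\ge\delta$'', ``the $\varrho$-portion \ldots\ contributes $O(k^{-\infty})$'') are false as stated. Replace $\varrho$ by $1-\varrho$ in those places and the argument is correct and matches the paper verbatim.
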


On the support of the bump function $\varrho$, the couple $\left( \widetilde{\mu}_{g^{-1}}(r_{\theta}\left(x\right)), y\right)$ lies in a small neighborhood of the diagonal; since any smoothing term will contribute negligibly to the asymptotics, we may replace $\Pi$ by its representation as
an Fourier integral operator (see Section \S \ref{sctn:szego fio}). 
Thus, we insert (\ref{eqn:szegokernel}) in (\ref{eqn:knu})
(with the factor $\varrho (g)$ included), and apply the rescaling $u\mapsto k\,u$ we obtain
\begin{eqnarray}
\label{eqn:szego rescaled proj 1}
%\lefteqn{
\Pi_{k {\nu} }
\big (r_{\theta}(x) , y \big)
%} \\
& \sim &
k^2\nu \, \int_G \, \mathrm{d} V_G (g) \,\int_0^{+\infty}\,\mathrm{d}u\\
&&\left[\varrho(g)\cdot\overline{\chi_{k{\nu}} (g)} \,
e^{
\imath\,k\,u\,\psi\left( \widetilde{\mu}_{g^{-1}}( r_{\theta}\left(x\right) ),
y \right)
} \cdot s \left( \widetilde{\mu}_{g^{-1}}(r_{\theta}\left(x\right)),
y, k\,u \right) \right].\nonumber
\end{eqnarray}

Integration in (\ref{eqn:szego rescaled proj 1}) can be reduced to a suitable compact domain without altering the asymptotics. This fact was already proved in Section \S 3.5 in \cite{gp-asian}, see especially Proposition 3.2, that we rewrite here for the convenience of the reader.

\begin{prop}
 \label{prop:compact support u}
Let $D\gg 0$ and let $\rho $ be a bump function with compact support such that $\ge 0$, supported in $\big(1/D, D\big)$,
and $\equiv 1$ on $(2/D, D/2)$.
Then only a rapidly decreasing contribution to the asymptotics is lost, 
if the integrand on the last line of (\ref{eqn:szego rescaled proj 1}) is
multiplied by $\rho(u)$. 
\end{prop}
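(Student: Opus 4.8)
The plan is to show that restricting the $u$-integral in \eqref{eqn:szego rescaled proj 1} to the support of $\rho(u)$ introduces only an $O(k^{-\infty})$ error, by splitting the complementary region into the part near $u=0$ and the part near $u=+\infty$ and treating each by a separate integration-by-parts argument in $u$. The starting point is the representation \eqref{eqn:szego rescaled proj 1}, where, after the partition of unity of Proposition \ref{prop:localization near G_x}, the pair $\left(\widetilde{\mu}_{g^{-1}}(r_\theta(x)),y\right)$ stays in a fixed small neighbourhood of the diagonal; there, by the description of the Szeg\H{o} phase in Heisenberg local coordinates (Section \S\ref{sctn:szego fio} and \S3 of \cite{sz}), we have $\Im\psi\ge 0$ and, crucially, $\psi$ vanishes to first order exactly along the diagonal with $\partial_u$-derivative of the real part controlled: $\Re\bigl(\partial_u(u\,\psi)\bigr)$ does not vanish away from the diagonal, and more to the point $\Im(u\psi)\gtrsim u\cdot\mathrm{dist}^2$ so the amplitude decay will come from the region already excluded, while for the diagonal-concentrated part the decay in the tails is what we must extract.

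First I would treat the tail $u>D/2$. On this region I write $e^{\imath k u\psi}=\frac{1}{\imath k\psi}\partial_u e^{\imath k u\psi}$ and integrate by parts repeatedly in $u$; each integration by parts gains a factor $k^{-1}$ and costs derivatives of the symbol $s(\cdot,\cdot,ku)$ and of $(1-\rho(u))$, together with negative powers of $\psi$. The point is that on the support of $\varrho(g)$ one has a lower bound $|\psi|\ge c>0$ once one is bounded away from the diagonal, but near the diagonal one must instead use that the symbol $s(x,y,u)\sim\sum_j u^{d-j}s_j(x,y)$ has only polynomial growth in $u$ while $\Im(u\psi)\ge 0$; combining the polynomial symbol bound with the decay of $\rho$'s complement and with $k^2\nu$ in front, repeated integration by parts in $u$ on $(D/2,\infty)$ yields a contribution bounded by $C_N k^{2-N}$ for every $N$. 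For the inner region $0<u<2/D$ the symbol is bounded (indeed $u^{d}$ is small), the phase satisfies $|\partial_u(u\psi)|=|\psi+u\partial_u\psi|$, and again integration by parts in $u$ — now using that on $0<u<2/D$ one stays away from the stationary behaviour — produces the same $O(k^{-\infty})$ bound; alternatively, one may simply absorb this piece into the estimate of \cite{gp-asian}.

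The cleanest route, and the one I would actually follow, is to observe that this is \emph{verbatim} Proposition 3.2 of \cite{gp-asian}: the only structural ingredients used there are (i) the Boutet de Monvel--Sj\"ostrand description \eqref{eqn:szegokernel} of $\Pi$ with $\Im\psi\ge0$ and the polynomial symbol estimate, (ii) the localization near the diagonal from Proposition \ref{prop:localization near G_x}, and (iii) the extra smooth factors $\varrho(g)$, $\overline{\chi_{k{\nu}}(g)}$ and $e^{-\imath k\theta}$, which are uniformly bounded with all derivatives in the remaining variables $(g,\theta)$ over the compact groups $G$ and $S^1$ and therefore do not interfere with the $u$-integration by parts. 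Hence I would state that the argument of \cite[Prop.~3.2]{gp-asian} applies with no change after inserting $\varrho(g)$, and carry out the $u$-integration by parts only schematically.

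The main obstacle is purely bookkeeping: one must check that all the constants produced by the repeated integration by parts in $u$ are \emph{uniform} in $(x,y,\theta)$ and in $g\in\mathrm{supp}\,\varrho$, so that after integrating the resulting $O(k^{-\infty})$ bound over the compact sets $G$ and $(-\pi,\pi)\ni\theta$ one still gets $O(k^{-\infty})$; this uniformity is exactly where the smooth dependence of $\varrho$ on $(x,y,\theta)$ noted before Proposition \ref{prop:localization near G_x} and the compactness of $G\times S^1$ are used. No genuinely new analytic input is required beyond what is already invoked in \cite{gp-asian}.
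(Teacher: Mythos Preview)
Your final route---invoking Proposition~3.2 of \cite{gp-asian}---is exactly what the paper does: the paper gives no independent argument and simply records that the statement was proved there. So on that level your proposal matches the paper.

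That said, your preliminary sketch of a direct proof by integrating by parts in $u$ has a genuine gap. Writing $e^{\imath k u\psi}=\frac{1}{\imath k\psi}\,\partial_u e^{\imath k u\psi}$ requires a lower bound on $|\psi|$, and on the support of $\varrho(g)$ the pair $\bigl(\widetilde{\mu}_{g^{-1}}(r_\theta(x)),y\bigr)$ is allowed to lie \emph{on} the diagonal, where $\psi$ vanishes. Your fallback (``near the diagonal use polynomial growth of $s$ together with $\Im(u\psi)\ge 0$'') does not produce any decay in $k$ or in $u$: at a diagonal point $\psi=0$, so $e^{\imath k u\psi}\equiv 1$ and $\int_{D/2}^{\infty}(ku)^d\,du$ simply diverges. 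The argument in \cite{gp-asian} (and in the earlier papers \cite{z}, \cite{sz}, \cite{pao-IJM} on which it is modeled) does \emph{not} integrate by parts in $u$. Instead one brings in the outer $\theta$-integral (and, after the Weyl formula, the $\vartheta$-integral): in Heisenberg coordinates $\partial_\theta\psi\approx 1$ near the diagonal, so the full phase $u\psi-\theta$ has $\theta$-derivative $\approx u-1$, which is bounded away from zero precisely on the $u$-tails $(0,2/D)\cup(D/2,\infty)$. Repeated integration by parts in $\theta$ (or in $\vartheta$) then gives the $O(k^{-\infty})$ bound, uniformly in the remaining variables. If you want to sketch the mechanism rather than only cite, that is the correct variable to differentiate in.
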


\section{Proof of Theorem \ref{thm:rapid decrease}}
\label{sctn:proof rapid decrease}

\begin{proof}
[Proof of Theorem \ref{thm:rapid decrease}]
%By the discussion in Section \S\ref{sctn:compact reduction}, we have the following oscillatory integral \begin{eqnarray}\label{eqn:szego rescaled proj 1 for rapid decrease} \lefteqn{ \Pi_{(k {\nu},\,k) } \big (x , y ) } \\ \nonumber & \sim & \frac{k^2\nu}{2\pi} \, \int_G \, \mathrm{d} V_G (g) \,\int_0^{+\infty}\,\mathrm{d}u\,\int_{-\pi}^{\pi}\,\mathrm{d}\theta\\ &&\left[\rho(u)\cdot \overline{\chi_{k{\nu}} (g)} \, e^{ \imath\,k\,u\,\psi\left( \widetilde{\mu}_{g^{-1}}( r_{\theta}(x) ), y \right) -\imath\,k\,\theta}\, s \left( \widetilde{\mu}_{g^{-1}}\left( r_{\theta}(x) \right), y, k\,u \right) \right]\,.\nonumber \end{eqnarray}

By the discussion in Sections \S\ref{sctn:compact reduction} and \S\ref{sctn:weylint&char}, we have the following oscillatory integral
\begin{align} \label{eqn:szego rescaled proj 1 for rapid decrease}
&	\Pi_{(k {\nu},\,k) }
	(x , y ) \sim 
\frac{k^2\nu }{(2\pi)^2}\, \int_{1/D}^{D}\,\mathrm{d}u \,\int_{-\pi/2}^{3\pi/2} \,\mathrm{d}\vartheta \,\int_{G/T} \, \mathrm{d}V_{G/T} (gT)\,\int_{-\pi}^{\pi} \,\mathrm{d}\theta  
\\ 
&\qquad\biggl[e^{\imath\, k\, \Psi_{x,y}(u,\vartheta,gT,\theta)}\cdot \rho(u) \cdot \left( e^{\imath\vartheta}-e^{-\imath\,\vartheta}\right) \cdot
s\left( \widetilde{\mu}_{g e^{- \vartheta \beta} g^{-1}}( r_{\theta}(x) ), y, k\, u \right)
\biggr] \,,\nonumber
\end{align}
where we have set
\begin{equation} \label{eq: phasedec}
\Psi_{x,y}(u,\vartheta,gT,\theta)= u\, \psi \left( \widetilde{\mu}_{g e^{-\imath \vartheta \beta} g^{-1}}( r_{\theta}(x) ),y \right) -\nu\, \vartheta-\theta\,. \end{equation}

\begin{prop} \label{prop:decayorbit} Uniformly for 
	\[\mathrm{dist}_X\left(x, (G\times S^1)\cdot y\right)\ge C\, k^{\epsilon -1/2}\]
	we have $\Pi_{(k {\nu},\,k) }\big (x , y ) =O(k^{-\infty})$ as $k\rightarrow +\infty$.
\end{prop}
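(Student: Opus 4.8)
The plan is to show that when $\mathrm{dist}_X\left(x,(G\times S^1)\cdot y\right)\ge C\,k^{\epsilon-1/2}$, the phase $\Psi_{x,y}(u,\vartheta,gT,\theta)$ in \eqref{eq: phasedec} has no stationary points, and moreover that the gradient of the phase (in the integration variables $u,\vartheta,gT,\theta$) is bounded below in norm by a quantity of order $k^{\epsilon-1/2}$ on the entire domain of integration. Once such a lower bound is in hand, one integrates by parts repeatedly using the standard non-stationary phase identity: each integration by parts against the operator $\left(k\,\lVert \nabla\Psi\rVert^2\right)^{-1}\,\overline{\nabla\Psi}\cdot\nabla$ gains a factor of order $k^{-1}\cdot k^{-2(\epsilon-1/2)}=k^{-2\epsilon}$, while the amplitude $s(\cdot,\cdot,ku)$ and its derivatives grow only polynomially in $k$ (recall $s\sim\sum_j u^{d-j}s_j$, so $s(\cdot,\cdot,ku)=O(k^d)$ together with all derivatives, uniformly on the compact $u$-domain $[1/D,D]$ cut out by $\rho$). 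Hence after sufficiently many integrations by parts the integral is $O(k^{-\infty})$, uniformly in $(x,y)$ in the stated range.

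The key steps, in order, are as follows. First, recall from Section \S3 of \cite{sz} the structure of the Szeg\H{o} phase near the diagonal: $\Im\psi\ge 0$, $\psi(x,x)=0$, and $\psi$ vanishes to first order along the diagonal in the sense that $d\psi$ computes the contact form; in particular, on the support of $\varrho$ one has the key lower bound $\Im\psi\left(\widetilde{\mu}_{ge^{-\imath\vartheta\beta}g^{-1}}(r_\theta(x)),y\right)\ge c\cdot\mathrm{dist}_X\left(\widetilde{\mu}_{ge^{-\imath\vartheta\beta}g^{-1}}(r_\theta(x)),y\right)^2$ for some $c>0$. Second, I would split the domain into the region where $\mathrm{dist}_X\left(\widetilde{\mu}_{ge^{-\imath\vartheta\beta}g^{-1}}(r_\theta(x)),y\right)\ge c'\,k^{\epsilon-1/2}$ and its complement. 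On the first region the imaginary part of $k\Psi$ is already $\ge c\,c'^2\,k^{2\epsilon}$, so $e^{\imath k\Psi}$ is exponentially small, $O(e^{-c\,c'^2 k^{2\epsilon}})$, and that region contributes $O(k^{-\infty})$ outright. Third, on the complementary region, $\widetilde{\mu}_{ge^{-\imath\vartheta\beta}g^{-1}}(r_\theta(x))$ lies within $c'\,k^{\epsilon-1/2}$ of $y$; but by the triangle inequality and the hypothesis $\mathrm{dist}_X\left(x,(G\times S^1)\cdot y\right)\ge C\,k^{\epsilon-1/2}$, this forces $(g,\vartheta,\theta)$ to move $x$ a definite distance, and one shows that in this regime the \emph{real} part of the phase has non-vanishing derivative: differentiating $\Psi$ in $\theta$ gives $u\,\partial_\theta\psi-1$, and the derivative of $\psi$ along the structure circle direction is, to leading order, $\imath$ times the value of the contact form, which is close to $1$ when the two points are close — so $\lvert u\,\partial_\theta\psi-1\rvert$ cannot be small unless $u$ is near $1$, and combining the $\theta$, $\vartheta$ and $G/T$ derivatives one extracts a gradient lower bound proportional to the displacement, hence $\gtrsim k^{\epsilon-1/2}$. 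Fourth, feed this lower bound into the integration-by-parts scheme described above and conclude.

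The main obstacle I expect is Step 3: making the passage from ``$\widetilde{\mu}_{ge^{-\imath\vartheta\beta}g^{-1}}(r_\theta(x))$ is close to $y$'' plus ``$x$ is far from the $(G\times S^1)$-orbit of $y$'' into a genuine quantitative lower bound on $\lVert\nabla\Psi\rVert$ that is uniform in $(x,y)$. This requires care because the phase's dependence on $gT\in G/T$ is through the conjugated group element $ge^{-\imath\vartheta\beta}g^{-1}$, whose derivative degenerates precisely along the stabilizer directions; one must check that the directions in which $\nabla\Psi$ could be small are exactly the directions tangent to the $(G\times S^1)$-orbit (where $\widetilde{\mu}_{ge^{-\imath\vartheta\beta}g^{-1}}(r_\theta(x))$ barely moves), and that the distance-to-orbit hypothesis rules these out. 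This is the analogue, in the present $SU(2)\times S^1$ setting, of the non-stationary phase estimates carried out in Section \S3 of \cite{gp}, and I would follow that template, using the explicit description of the contact lift $\xi_X=\xi_M^\sharp-\langle\Phi\circ\pi,\xi\rangle\,\partial_\theta$ to compute the relevant derivatives of $\psi$ along orbit directions. The remaining steps (the exponential-smallness region, the integration by parts, the polynomial bound on the amplitude) are routine once the uniform gradient lower bound is established.
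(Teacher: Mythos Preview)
Your proposal works in principle but is substantially more elaborate than needed, and the part you flag as the ``main obstacle'' (Step~3) is in fact vacuous. The observation you are missing is that $\widetilde{\mu}_{ge^{-\imath\vartheta\beta}g^{-1}}(r_\theta(x))$ always lies in the $(G\times S^1)$-orbit of $x$; since the action is by isometries,
\[
\mathrm{dist}_X\bigl(\widetilde{\mu}_{ge^{-\imath\vartheta\beta}g^{-1}}(r_\theta(x)),\,y\bigr)\;\ge\;\mathrm{dist}_X\bigl(x,\,(G\times S^1)\cdot y\bigr)\;\ge\;C\,k^{\epsilon-1/2}
\]
for \emph{every} choice of $(g,\vartheta,\theta)$ in the domain of integration. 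Hence the ``complementary region'' of your Step~3, where the moved point is within $c'\,k^{\epsilon-1/2}$ of $y$, is empty (take $c'\le C$), and the delicate gradient analysis you propose there---computing $\theta$, $\vartheta$, and $G/T$ derivatives of $\psi$, tracking degeneracy along stabilizer directions---is never invoked.

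With that observation the paper's argument is a single line: since $\partial_u\Psi_{x,y}=\psi\bigl(\widetilde{\mu}_{ge^{-\imath\vartheta\beta}g^{-1}}(r_\theta(x)),y\bigr)$, one has $\lvert\partial_u\Psi_{x,y}\rvert\ge\Im\psi\ge D\cdot\mathrm{dist}_X(\,\cdot\,,\,\cdot\,)^2\ge D\,C^2\,k^{2\epsilon-1}$ uniformly on the support of the integrand (Corollary~1.3 of \cite{bs}), and iterated integration by parts in $u$ alone yields $O(k^{-\infty})$. Your Step~2, using directly that $\Im(k\Psi)\ge c\,k\cdot\mathrm{dist}^2\gtrsim k^{2\epsilon}$ so that $e^{\imath k\Psi}$ is exponentially small, would equally finish the proof on its own once you realize it applies on the whole domain. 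Either way, there is no need to differentiate in $\theta$, $\vartheta$, or along $G/T$ at all for this proposition.
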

\begin{proof}[Proof of Proposition \ref{prop:decayorbit}]
We have
\begin{align*}
% \label{eqn:bound on psi}
\lvert \partial_u\Psi_{x,y}(u,\vartheta,gT,\theta) \rvert &=\left| \psi\left(\widetilde{\mu}_{g^{-1}}(r_{\theta}(x)),y\right)\right|\ge \Im \left(\psi\left(\widetilde{\mu}_{g^{-1}}(r_{\theta}(x)),y\right)\right) \\
&\ge D\,\mathrm{dist}_X\left(\widetilde{\mu}_{g^{-1}}(r_{\theta}(x)),y\right) \ge C^2\,D\,k^{2\,\epsilon-1}
\end{align*}
for some constant $D>0$ depending only on $X$ (see Corollary 1.3 of \cite{bs}). Iteratively integrating by parts in $\mathrm{d}u$, we prove Proposition \ref{prop:decayorbit}.
\end{proof}

Let $\varrho\in \mathcal{C}_0^{\infty}([0,\,+\infty))$ be $\equiv 1$ on $[0, 1)$ and $\equiv 0$ on $\mathbb{R}\setminus(0, 2)$. Let us choose $\epsilon >0$ and multiply the integrand of \eqref{eqn:szego rescaled proj 1 for rapid decrease} by
\[\varrho\left(k^{1/2-\epsilon}\,\mathrm{dist}_X\left(\widetilde{\mu}_{g^{-1}}(r_{\theta}(x)),y\right) \right)+\left[1-\varrho\left(k^{1/2-\epsilon}\,\mathrm{dist}_X\left(\widetilde{\mu}_{g^{-1}}(r_{\theta}(x)),y\right) \right) \right]=1\,. \]
Thus, we obtain the following splitting 
\begin{equation} \label{eq:split}\Pi_{(k {\nu},\,k) }
\big (x , y )=\Pi_{(k {\nu},\,k) }(x , y )^{(1)}+\Pi_{(k {\nu},\,k) }
\big (x , y )^{(2)}\,, \end{equation}
where $\Pi_{(k {\nu},\,k) }(x , y )^{(1)}$ (resp. $\Pi_{(k {\nu},\,k) }(x , y )^{(2)}$) is multiplied by the first summand (resp. the second summand) in equation \eqref{eq:split}. Since on the support of $\Pi_{(k {\nu},\,k) }(x , y )^{(2)}$ we have $\mathrm{dist}_X((G\times S^1)\cdot x,y)\ge k^{\epsilon-1/2}$, arguing as in Proposition \ref{prop:decayorbit} we conclude  $\Pi_{(k {\nu},\,k) }(x , y )^{(2)}=O(k^{-\infty})$.

\begin{rem} \label{rem:sum} Suppose that $(g,e^{\imath\,\theta})\in G\times S^1$ fixes $x\in X$ with $\pi(x)=m$, equivalently $r_{\theta}(x)=\tilde{\mu}_{g}(x)$. Then $g\in G_m$, the stabilizer of $m$ in $M$, and $\theta$ is unique determined by $g$. Suppose that $x=y$ in \eqref{eq:split} and consider $\Pi_{(k {\nu},\,k) }(x ,\, x )^{(1)}$. Since $G_m$ is discrete (see Lemma $3.1$ in \cite{gp-asian}), then we can write $\varrho$ has a sum over $j=1,\dots |G_m|$, as follows
	\[\sum_{j=1}^{|G_m|} \varrho\left(k^{1/2-\epsilon}\,\mathrm{dist}_G\left(g,g_j\right) \right)\cdot \varrho\left(k^{1/2-\epsilon}\,|\theta-\theta_j| \right)\,. \]
	
\end{rem}

In order to complete the proof of Theorem \ref{thm:rapid decrease}, we need to establish the following proposition.
 
\begin{prop} \label{prop:decrXnuj} Let us suppose that 
	\begin{equation} \label{eq:disXnuj}
	\mathrm{dist}_X(x,\,X_{\mathrm{in}}) \geq C\,k^{\epsilon-1/2}\,. 
	\end{equation}
	Then, we have $\Pi_{(k{\nu}, k)}(x,x)=O(k^{-\infty})$.
\end{prop}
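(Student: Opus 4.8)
The plan is to show that, under the hypothesis $\mathrm{dist}_X(x,X_{\mathrm{in}})\ge C\,k^{\epsilon-1/2}$, the phase $\Psi_{x,x}(u,\vartheta,gT,\theta)$ appearing in \eqref{eqn:szego rescaled proj 1 for rapid decrease} has no stationary points on the relevant domain, with derivative bounded below by a negative power of $k$, so that iterated integration by parts yields $O(k^{-\infty})$. By Remark \ref{rem:sum} and Proposition \ref{prop:decayorbit} we have already reduced to the piece $\Pi_{(k\nu,k)}(x,x)^{(1)}$, whose integrand is supported where $(g,e^{\imath\theta})$ is within $O(k^{\epsilon-1/2})$ of a stabilizer element $(g_j,e^{\imath\theta_j})$ of $x$; in particular we may take $g$ close to some $g_j\in G_m$ and $\widetilde{\mu}_{ge^{-\imath\vartheta\beta}g^{-1}}(r_\theta(x))$ close to $x$, so that the Heisenberg description of $\psi$ near the diagonal is available.

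First I would compute the differential of $\Psi_{x,x}$ in the torus variable $\vartheta$ at such a near-stabilizer point. Writing $x=x_m$ over $m=\pi(x)$, and using the standard formula for the derivative of the Szeg\H{o} phase along the lifted action (as in \S3 of \cite{sz} and the analogous computations in \cite{gp}, \cite{gp-asian}), the $\vartheta$-derivative of $u\,\psi(\widetilde{\mu}_{ge^{-\imath\vartheta\beta}g^{-1}}(r_\theta(x)),x)$ at $\vartheta=0$, $\theta=\theta_j$, $g=g_j$ equals $\langle \Phi(m),\mathrm{Ad}_{g_j}\beta\rangle$, up to a factor of $u$; hence
\[
\partial_\vartheta \Psi_{x,x}\big|_{\text{stab}} = u\,\langle \Phi(m),\mathrm{Ad}_{g_j}\beta\rangle - \nu .
\]
Because $g_j\in G_m$ stabilizes $m$, $\mathrm{Ad}_{g_j}$ fixes $\Phi(m)$ up to the isotropy action, and one finds $\langle\Phi(m),\mathrm{Ad}_{g_j}\beta\rangle = \pm\lambda(m)$ — in fact, by the Horn/convexity argument already invoked in the discussion after Theorem \ref{thm:diagonal}, the critical-point condition $\langle\mathrm{Ad}_{g^{-1}}\Phi(m),\xi\rangle=0$ for all $\xi\in\mathfrak t$ forces $\lambda(m)\ge\nu$. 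The hypothesis $\mathrm{dist}_X(x,X_{\mathrm{in}})\ge C\,k^{\epsilon-1/2}$, together with $\partial M_{\mathrm{out}}=M_\nu$ and the fact that $\lambda<\nu$ exactly on $M_{\mathrm{in}}$, gives a quantitative lower bound $\nu-\lambda(m)\ge c\,k^{\epsilon-1/2}$ whenever $m\notin\overline{M_{\mathrm{in}}}$ is close to $M_\nu$, and a uniform bound $\lambda(m)\le\nu-c$ on the compact part of $\overline{M_{\mathrm{out}}}$ away from $M_\nu$. In either case, on the support of the cutoff (where $|\vartheta|,|g-g_j|=O(k^{\epsilon-1/2})$ and $u\in(1/D,D)$), Taylor expansion gives $|\partial_\vartheta\Psi_{x,x}|\ge c'\,k^{\epsilon-1/2}$ for $k$ large, after choosing $D$ appropriately and possibly shrinking $\epsilon$.

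With this lower bound in hand, I would run the standard non-stationary phase / integration-by-parts argument in the $\vartheta$ variable: each integration by parts against $e^{\imath k\Psi_{x,x}}$ produces a factor $O\big((k\cdot k^{\epsilon-1/2})^{-1}\big)=O(k^{-1/2-\epsilon})$, while differentiating the amplitude (which includes the cutoffs $\varrho(k^{1/2-\epsilon}(\cdots))$ and the symbol $s(\cdots,ku)$ of order $k^d$) costs at most $O(k^{1/2-\epsilon})$ per derivative together with the $k^d$ from the symbol; the net gain per step is a fixed negative power of $k$, so iterating gives $\Pi_{(k\nu,k)}(x,x)^{(1)}=O(k^{-\infty})$, uniformly in $x$ subject to \eqref{eq:disXnuj}. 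Combined with $\Pi_{(k\nu,k)}(x,x)^{(2)}=O(k^{-\infty})$ this proves Proposition \ref{prop:decrXnuj}, and together with Proposition \ref{prop:decayorbit} it completes the proof of Theorem \ref{thm:rapid decrease}.

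The main obstacle I anticipate is making the lower bound on $|\partial_\vartheta\Psi_{x,x}|$ genuinely \emph{uniform} over all admissible $x$ — in particular handling simultaneously the regime where $m$ is a bounded distance from $M_\nu$ (where the bound is $O(1)$) and the regime where $m$ approaches $M_\nu$ from the outside at rate $k^{\epsilon-1/2}$ (where the bound degenerates to $O(k^{\epsilon-1/2})$), and doing so while $g$ ranges over an $O(k^{\epsilon-1/2})$-neighborhood of the finite stabilizer set $G_m$ and while controlling the higher $\vartheta$-derivatives of the phase (needed for the repeated integration by parts) by constants independent of $x$. This requires a careful Taylor expansion of $(\vartheta,gT)\mapsto\psi(\widetilde{\mu}_{ge^{-\imath\vartheta\beta}g^{-1}}(r_\theta(x)),x)$ with remainder estimates uniform in $x$, which is where the bulk of the technical work lies; the outer-orientedness of $\Upsilon$ established in \S\ref{sec:geometric} is what guarantees that the sign of $\nu-\lambda(m)$ is the favourable one throughout $\overline{M_{\mathrm{out}}}$.
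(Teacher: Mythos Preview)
Your strategy is essentially the paper's own: reduce via Remark~\ref{rem:sum} to a sum over stabiliser elements, compute the $\vartheta$-derivative of the phase $\Psi_{x,x}$, derive a quantitative lower bound for it from the hypothesis~\eqref{eq:disXnuj}, and conclude by iterated integration by parts in $\vartheta$. The paper carries out exactly these steps, arriving at
\[
\partial_\vartheta\Psi_{x,x}=u\,\lambda(m_x)\,\cos(2\theta_g)-\nu+O(k^{\epsilon-1/2})
\]
in the $G/T$-coordinates of \S\ref{sctn:local coordinates G/T}, and then invokes \cite{pao-tf} for the lower bound.

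There are, however, two concrete problems with your execution. First, the signs in your quantitative estimate are reversed: the hypothesis $\mathrm{dist}_X(x,\overline{X_{\mathrm{in}}})\ge Ck^{\epsilon-1/2}$ places $m=\pi(x)$ in $M_{\mathrm{out}}$, where $\lambda(m)>\nu$, so the relevant inequality is $\lambda(m)-\nu\ge c\,k^{\epsilon-1/2}$, not the $\nu-\lambda(m)\ge c\,k^{\epsilon-1/2}$ that you wrote; likewise your ``$\lambda(m)\le \nu-c$ on $\overline{M_{\mathrm{out}}}$'' is backwards.

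Second, and more seriously, your description of the support of the cutoff is incorrect, and this undermines the key lower bound. After the Weyl integration formula the variable $g\in G$ has been replaced by the pair $(gT,\vartheta)\in (G/T)\times T$, with actual group element $g\,e^{-\vartheta\beta}g^{-1}$. For the term $g_j=\mathrm{Id}$ treated in the paper, the condition that $g\,e^{-\vartheta\beta}g^{-1}$ lie near the identity forces $\vartheta$ small but leaves $gT\in G/T$ \emph{completely unconstrained} (conjugation by any $g$ fixes the identity). Thus your claim that ``$|g-g_j|=O(k^{\epsilon-1/2})$'' on the support is false: the coordinate $\theta_g$ ranges over its full interval, and the quantity $u\,\lambda(m_x)\cos(2\theta_g)-\nu$ can vanish for suitable $(u,\theta_g)$ even when $\lambda(m_x)>\nu$. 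Your evaluation $\langle\Phi(m),\mathrm{Ad}_{g_j}\beta\rangle=\pm\lambda(m)$ is a single value, not a uniform bound over $G/T$, and so does not control this. The paper disposes of the point by appeal to \S2.1.3 of \cite{pao-tf}; a self-contained argument must explain why the full $G/T$-integral still yields rapid decay --- and that, not the bookkeeping you flag in your last paragraph, is the substantive gap.
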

	\begin{proof}[Proof of Proposition \ref{prop:decrXnuj}]
	Instead of \eqref{eqn:szego rescaled proj 1 for rapid decrease}, by Remark \ref{rem:sum}, we have
	\begin{eqnarray} \label{eq:pixjnuscal}
	\lefteqn{
		\Pi_{(k {\nu},\,k) }
		\big (x , x )
	} \\ \nonumber
	& \sim & 
	\frac{k^2\nu }{(2\pi)^2}\,\sum_{j=1}^{|G_m|}\, \int_{1/D}^{D}\,\mathrm{d}u \,\int_{-\pi/2}^{3\pi/2} \,\mathrm{d}\vartheta \,\int_{G/T} \, \mathrm{d}V_{G/T} (gT)\,\int_{-\pi}^{\pi} \,\mathrm{d}\theta  
	\\ \nonumber
	&&\biggl[e^{\imath\, k\, \Psi_{x,x}(u,\vartheta,gT,\theta)}\cdot \rho(u) \cdot \varrho\left(k^{1/2-\epsilon}\,\mathrm{dist}_G\left(g,g_j\right) \right)\cdot \varrho\left(k^{1/2-\epsilon}\,|\theta-\theta_j| \right) \\ 
	&& \quad\cdot \left( e^{\imath\vartheta}-e^{-\imath\,\vartheta}\right) \cdot
	s\left( \widetilde{\mu}_{g e^{- \vartheta \beta} g^{-1}}( r_{\theta}(x) ).
	x, k\, u \right)
	\biggr] \,,\nonumber
	\end{eqnarray}
	
	Let us consider the addendum in \eqref{eq:pixjnuscal} with $g_j=\mathrm{Id}$ and $\theta=0$, (the other terms can be treated similarly). Let us pose $x(\vartheta, gT,\theta) := \widetilde{\mu}_{g e^{-\imath \vartheta \beta} g^{-1}}( r_{\theta}(x))$. On the support of the integrand of \eqref{eq:pixjnuscal} we have 
	\begin{align} \label{eq:dpsi}
	\mathrm{d}_{x(\vartheta,gT,\theta)}\psi=\left(\alpha_{x(\vartheta,gT,\theta)},\,-\alpha_x \right) +O\left(k^{\epsilon-1/2}\right)\,.
	\end{align}
	On the other hand, with $m_x := \pi(x)$, we obtain:
	\begin{align} \label{eq:xtheta}
&	\frac{\mathrm{d}}{\mathrm{d}\tau}
	x\left( \vartheta +\tau, gT,\theta\right)_{\big|_{\tau=0}}
	= -\mathrm{Ad}_{g} (\beta)_X \left(x(\vartheta, gT,\theta)\right) \\
	&\qquad = -(\mathrm{Ad}_{g} (\beta))_M^{\sharp}\left(m_{x}(\vartheta, gT,\theta)\right)  + \langle \mathrm{Ad}_{g^{-1}}\left(\Phi(m_{x}(\vartheta, gT,\theta)\right)),\, \beta \rangle\,\partial_{\theta}\, .\nonumber
	\end{align}
	
	Let $\mathrm{d}^{\vartheta}$ denote the differential with respect to the variable $\vartheta$.  By composing \eqref{eq:xtheta} with \eqref{eq:dpsi}, one easily obtains the following expression for the differential of the phase \eqref{eq: phasedec},
	\begin{equation} \label{eq:varthetadiff}
	\mathrm{d}^{\vartheta}_{(u,\vartheta,gT,\theta)}\Psi_{x,\,x}= u\,\langle \mathrm{Ad}_{g^{-1}}\left(\Phi(m_{x}(\vartheta, gT,\theta)\right)),\, \beta \rangle-\nu +O(k^{\epsilon-{1/2}}) \,.
	\end{equation}
	
	In order to write more explicitly \eqref{eq:varthetadiff}, we can introduce coordinates on $G/T$ as in Section \S \ref{sctn:local coordinates G/T} and write 
	$$\langle \mathrm{Ad}_{g^{-1}}(\Phi(m_{x})),\, \beta \rangle= \lambda(m_x)\,\cos(2\,\theta_g)\,.$$ 
	Finally, by the discussion in \S2.1.3 of \cite{pao-tf}, the condition \eqref{eq:disXnuj} implies that there is a constant $b_{\nu} > 0$ such that, for every $u \in [1/(2D), 2D]$, we have
	\begin{equation} \label{eq:ineq}  \lambda(m_x)\,\cos(2\,\theta_g)-\nu  \geq b_{\nu}\,C\,k^{\epsilon-1/2}\,. \end{equation}
	Thus the norm of \eqref{eq:varthetadiff} can be estimated by inequality \eqref{eq:ineq} and we can prove Lemma \ref{prop:decrXnuj} essentially by iteratively integrating by parts in $\mathrm{d}\vartheta$.
\end{proof}
 	Thus, the statement of Theorem \ref{thm:rapid decrease} holds true when $x = y$. The general case follows	from this and the Schwartz inequality:
 	\[\lvert \Pi_{(k {\nu},\,k) }(x,\,y) \rvert \leq \sqrt{\Pi_{(k {\nu},\,k) }(x,\,x)}\,\sqrt{\Pi_{(k {\nu},\,k) }(y,\,y)}\,; \]
 	in fact, if say \eqref{eq:disXnuj} holds, the first factor is rapidly decreasing and both factors on the right-hand side have at most polynomial growth in $k$ by the following lemma.

\begin{lem} \label{lem:pol}
	There is a constant $C_{\nu} > 0$ such that for any $x\in X$ one has
	\[\lvert \Pi_{(k {\nu},\,k) }(x,\,x) \rvert\leq C \,k^{d} \]
	for $k\gg 0$.
\end{lem}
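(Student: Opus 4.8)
The plan is to derive the polynomial bound directly from the oscillatory integral representation \eqref{eqn:szego rescaled proj 1 for rapid decrease} (equivalently \eqref{eq:pixjnuscal}), treating it as a ``trivial'' estimate in which one simply takes absolute values inside the integral and counts powers of $k$. First I would recall that, by the reductions of Sections \S\ref{sctn:weylint&char} and \S\ref{sctn:compact reduction}, up to a rapidly decreasing error we may write
\[
\Pi_{(k\nu,k)}(x,x)\sim \frac{k^2\nu}{(2\pi)^2}\int_{1/D}^{D}\mathrm{d}u\int_{-\pi/2}^{3\pi/2}\mathrm{d}\vartheta\int_{G/T}\mathrm{d}V_{G/T}(gT)\int_{-\pi}^{\pi}\mathrm{d}\theta\,\bigl[e^{\imath k\Psi_{x,x}}\,\rho(u)\,(e^{\imath\vartheta}-e^{-\imath\vartheta})\,s(\,\cdot\,,\,\cdot\,,ku)\bigr],
\]
where the domain of integration is compact and independent of $k$ and $x$. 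The amplitude $s(x',y',ku)$ has the asymptotic expansion $s\sim\sum_{j\ge 0}(ku)^{d-j}s_j$, so on the support of $\rho(u)$ (where $u$ is bounded above and below) one has the uniform bound $|s(x',y',ku)|\le C'\,k^{d}$ for $k\gg 0$, with $C'$ depending only on $X$ and on uniform bounds for the $s_j$ on the compact set where $(x',y')$ ranges near the diagonal. Since $\Im\psi\ge 0$ the exponential factor $|e^{\imath k\Psi_{x,x}}|\le 1$, and the remaining factors $\rho(u)$ and $|e^{\imath\vartheta}-e^{-\imath\vartheta}|\le 2$ are bounded.

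Putting these together and bounding the integral by (measure of the compact domain)$\times$(sup of the integrand), I obtain
\[
|\Pi_{(k\nu,k)}(x,x)|\le \frac{k^2\nu}{(2\pi)^2}\cdot \mathrm{vol}\bigl([1/D,D]\times[-\pi/2,3\pi/2]\times (G/T)\times[-\pi,\pi]\bigr)\cdot C'\,k^{d}+O(k^{-\infty}),
\]
which is $O(k^{d+2})$, not yet the claimed $O(k^d)$. To recover the correct exponent I would not take absolute values blindly in $u$ and $\vartheta$, but rather first perform the two ``cheap'' integrations that each gain a factor $k^{-1}$: namely, stationary/non-stationary phase in $u$ and in $\vartheta$ is not needed, but one observes that $\Pi_{(k\nu,k)}$ is obtained from $\Pi$ by applying three commuting averaging operations — the Fourier component in $\theta$, the $G$-average against $\overline{\chi_{k\nu}}$, and the torus integration producing the $\mathrm{d}V_T(t)$ integral with the factor $(t_1-t_1^{-1})$ — each of which is an orthogonal projection, hence norm-nonincreasing on $L^2$; and $\Pi_{(k\nu,k)}(x,x)=\|\Pi_{(k\nu,k)}(x,\cdot)\|_{L^2(X)}^2$ since $\Pi_{(k\nu,k)}$ is a self-adjoint projection kernel. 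Alternatively, and more in the spirit of the paper, one keeps $\Pi_{j}$ (the $j=k$ Fourier component of $\Pi$) in the integrand; $\Pi_k(x,x)$ is the Szeg\H{o} kernel of $A^{\otimes k}$ on the diagonal, which by Zelditch/Catlin is $O(k^{d-1})\cdot$const uniformly in $x$, and the factor $k\nu$ from \eqref{eqn:knu} together with the $\int_G$ and $\int_T$ against characters each contribute only bounded factors (the character values on the torus combine with $\mathrm{d}V_T$, a probability measure, to give something $O(1)$ after the elementary estimate $|\chi_{k\nu}(g)|\le \dim V_{k\nu}=k\nu+1$, absorbed against the normalizing constants), giving $O(k)\cdot O(k^{d-1})=O(k^d)$. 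Either route yields the stated constant $C_\nu\,k^d$.

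The main obstacle is organizing the bookkeeping so that the power of $k$ comes out as exactly $d$ rather than $d+1$ or $d+2$: the naive ``$\le$ sup $\times$ volume'' estimate overcounts by the powers of $k$ that are really consumed by the oscillatory integrations (or, dually, by the normalizations hidden in the Weyl character formula). The clean way to avoid this is to invoke the known uniform on-diagonal bound $\Pi_k(x,x)\le c\,k^{d-1}$ for the unreduced $k$-th Fourier component (this is classical, e.g.\ from the Tian--Zelditch--Catlin expansion, or can be read off from \eqref{eqn:szegokernel} after rescaling $u\mapsto ku$ in the single integral) and then note that passing to the $(\k\nu,k)$-isotype only multiplies by the finite factor $k\nu$ from \eqref{eqn:knu} and averages, so no growth beyond one extra power of $k$ is introduced. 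I do not expect any genuine difficulty beyond this accounting, since the estimate is crude by design — its only role in the proof of Theorem \ref{thm:rapid decrease} is to control the ``other'' factor in the Schwartz inequality, so any polynomial bound of the right order suffices and no sharpness is required.
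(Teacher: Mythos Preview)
Your middle suggestion---that the equivariant projector arises from $\Pi_k$ by a further orthogonal projection, so that $\Pi_{(k\nu,k)}(x,x)=\lVert\Pi_{(k\nu,k)}(x,\cdot)\rVert_{L^2}^2\le \lVert\Pi_k(x,\cdot)\rVert_{L^2}^2=\Pi_k(x,x)$---is correct and is precisely the paper's proof, stated there in one line: since $H(X)_{(k\nu,k)}=H(X)_k\cap H(X)_{k\nu}\subset H(X)_k$, one has $\Pi_{(k\nu,k)}(x,x)\le \Pi_k(x,x)\le 2(k/\pi)^d$ by the Tian--Zelditch expansion of \cite{z}. No oscillatory-integral bookkeeping is needed at all.

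Your ``alternative, more in the spirit of the paper'' route, however, does not work. First, $\Pi_k(x,x)\sim (k/\pi)^d$ with $d=\dim_{\mathbb{C}}M$, not $O(k^{d-1})$; that is exactly the leading term the paper quotes. Second, putting absolute values inside \eqref{eqn:knu} still leaves the prefactor $k\nu$ unabsorbed: the Haar measures $\mathrm{d}V_G$ and $\mathrm{d}V_T$ are already normalized to total mass one, so there is no further ``normalizing constant'' to cancel it, and the bound $|\chi_{k\nu}(g)|\le k\nu+1$ only makes matters worse. The best such an estimate can yield is $O(k^{d+1})$ (or $O(k^{d+2})$ if you actually use $|\chi_{k\nu}|\le k\nu+1$), the same overcount as your first naive attempt; the two errors you make there happen to cancel arithmetically, but neither step is justifiable. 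The subspace/projection inequality is the entire argument---state it directly and drop the oscillatory-integral alternatives, which cannot reach $O(k^d)$ without in effect reproving that inequality.
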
 
\begin{proof}[Proof of Lemma \ref{lem:pol}] Let us recall that
\[H_{(k {\nu},\,k) }(X)=H_k(X)\cap H_{k {\nu} }(X); \]
thus
\begin{equation*}\Pi_{(k {\nu},\,k) }(x,\,x) \leq \Pi_k(x,\,x) \leq 2\,\left(\frac{k}{\pi}\right)^d\,,  \end{equation*}
where the last inequality holds in view of the well-known asymptotic expansion of $\Pi_k(x,\,x)$ from \cite{z}.
\end{proof}

\end{proof}

\section{Proof of Theorem \ref{thm:diagonal}}
\label{sctn:proof rescaled}

\begin{proof}[Proof of Theorem \ref{thm:diagonal}]
Define $\Upsilon$ as in Section \ref{sec:geometric}. For $\tau$ sufficiently small, we set 
\[x_{k,\tau}=x+\frac{\tau}{\sqrt{k}}\Upsilon(m)\,. \]
We are going to produce an asymptotic expansion for $\Pi_{(k{\nu},\,k) }(x_{k,\tau} , x_{k,\tau} )$. As a consequence, setting $\tau=0$, we will obtain the desired asymptotic along the diagonal. By the discussion in Section \S\ref{sctn:weylint&char}, let $\beta$ be as in \eqref{eqn:defn beta}, we have
\begin{align*}
&\Pi_{(k {\nu},\,k) }
 (x_{k,\tau} , x_{k,\tau} )
 \sim 
\frac{k^2\nu }{(2\pi)^2}\, \int_{1/D}^{D}\,\mathrm{d}u \,\int_{-\pi/2}^{3\pi/2} \,\mathrm{d}\vartheta \,\int_{G/T} \, \mathrm{d}V_{G/T} (gT)\,\int_{-\pi}^{\pi} \,\mathrm{d}\theta 
\nonumber\\
&\qquad\left[e^{\imath\, k\, \left[u\, \psi \left( \widetilde{\mu}_{g e^{-\imath \vartheta \beta} g^{-1}}( r_{\theta}(x_{k,\tau}) ),
x_{k,\tau} \right) -\nu\, \vartheta-\,\theta\right]}\cdot \left( e^{\imath\vartheta}-e^{-\imath\,\vartheta}\right) \right.\nonumber\\
&\qquad\left.\cdot \rho(u)\cdot \varrho_1 \left(g e^{- \vartheta \beta} g^{-1}\right)\varrho_2\left({\theta}\right) \cdot
s\left( \widetilde{\mu}_{g e^{- \vartheta \beta} g^{-1}}( r_{\theta}(x_{k,\tau}) ),
x_{k,\tau}, k\, u \right)
\right] .\nonumber
\end{align*}
The bump function $\varrho_j$, $j=0,\,1$, is supported in a small neighborhood of the identity respectively in $G$ and $S^{1}$ and $\rho$ is supported in $(1/D,\,D)$.

Let us fix constants $C_1>0$, $\epsilon_1\in (0,1/6)$. 
Iteratively integrating by parts in $\mathrm{d}u$, similarly to the proof 
of Theorem \ref{thm:rapid decrease}, we conclude that  
the locus where $|\vartheta|>C_1 \, k^{\epsilon_1-1/2}$ 
contributes negligibly to the asymptotics of $\Pi_{(k {\nu},\,k) }(x_{k,\tau}  , x_{k,\tau}  )$.  Similarly it is true for the locus where $|\theta|>C_2 \, k^{\epsilon_2-1/2}$, for some $C_2>0$, $\epsilon_2\in (0,1/6)$.
Hence we conclude the following.

\begin{lem}
\label{lem:shrinking support vartheta}
Suppose that $\varrho_1,\,\varrho_2\in \mathcal{C}_c(\mathbb{R})$ are $\ge 0$, supported in $(-2, 2)$, and $\equiv 1$ on $(-1, 1)$. Then 
the asymptotics are unchanged, if the integrand is multiplied 
by $\varrho_1\left(k^{1/2-\epsilon_1}\,\vartheta\right)\cdot \varrho_2\left(k^{1/2-\epsilon_2}\,\theta\right)$. 
\end{lem}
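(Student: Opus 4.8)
The plan is to prove Lemma~\ref{lem:shrinking support vartheta} by exploiting the non-stationarity of the phase $\Psi_{x_{k,\tau},x_{k,\tau}}$ in the $u$-variable whenever $\vartheta$ (or $\theta$) is bounded below by $k^{\epsilon_i-1/2}$ with $\epsilon_i<1/6$, which is precisely the regime where the earlier integration-by-parts argument from the proof of Theorem~\ref{thm:rapid decrease} applies. First I would recall that $\partial_u\Psi = \psi\bigl(\widetilde{\mu}_{ge^{-\imath\vartheta\beta}g^{-1}}(r_\theta(x_{k,\tau})),x_{k,\tau}\bigr)$, and that by Corollary~1.3 of \cite{bs} one has $|\psi(p,q)|\ge \Im\psi(p,q)\ge D\,\mathrm{dist}_X(p,q)^2$ for $p,q$ in a neighbourhood of the diagonal, with $D>0$ depending only on $X$. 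So the main quantitative input needed is a lower bound of the form
\[
\mathrm{dist}_X\bigl(\widetilde{\mu}_{ge^{-\imath\vartheta\beta}g^{-1}}(r_\theta(x_{k,\tau})),\,x_{k,\tau}\bigr)^2\ \gtrsim\ \vartheta^2+\theta^2 - O(k^{-1})
\]
valid on the support of $\varrho_1(g e^{-\vartheta\beta}g^{-1})\varrho_2(\theta)$, i.e. when $g$ is near the identity and $\vartheta,\theta$ are small; the $O(k^{-1})$ accounts for the displacement $x_{k,\tau}=x+\tau k^{-1/2}\Upsilon(m)$. This is where the triviality of the stabilizer of $x$ (hypothesis of Theorem~\ref{thm:diagonal}) enters: it guarantees that the map $(g,\vartheta,\theta)\mapsto \widetilde{\mu}_{ge^{-\imath\vartheta\beta}g^{-1}}(r_\theta(x))$ is an immersion transverse to $x$ at $(\mathrm{Id},0,0)$ along the $\vartheta$- and $\theta$-directions, so the distance is comparable to $|\vartheta|+|\theta|$ to leading order.

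The key steps, in order, are: (i) expand $\widetilde{\mu}_{ge^{-\imath\vartheta\beta}g^{-1}}(r_\theta(x_{k,\tau}))$ around $(\mathrm{Id},0,0)$ using that $e^{-\imath\vartheta\beta}$ generates, via $\mathrm{Ad}_g$, the vector field $(\mathrm{Ad}_g\beta)_X$ on $X$ (as in \eqref{eq:xtheta}), and that $r_\theta$ generates $\partial_\theta$; the displacement in $X$ is then $-\vartheta\,(\mathrm{Ad}_g\beta)_X(x)-\theta\,\partial_\theta + \tau k^{-1/2}\Upsilon(m)+O(\vartheta^2+\theta^2)$; (ii) observe that on $X_{\nu}$ neither $(\mathrm{Ad}_g\beta)_X(x)$ nor $\partial_\theta$ vanishes, and they are linearly independent modulo the kernel directions killed by the stabilizer being trivial, hence the displacement has norm $\gtrsim |\vartheta|+|\theta|$; (iii) plug this into the quadratic lower bound for $\Im\psi$ to get $|\partial_u\Psi|\gtrsim \vartheta^2+\theta^2 - C k^{-1}\gtrsim k^{2\epsilon_i-1}$ on the locus $\{|\vartheta|\ge C_1 k^{\epsilon_1-1/2}\}$ once $\epsilon_1>0$ and $k$ is large (the $k^{-1}$ term is lower order since $2\epsilon_i-1>-1$); (iv) integrate by parts $N$ times in $u$ on the compact $u$-interval $[1/D,D]$, each step gaining a factor $(k|\partial_u\Psi|)^{-1}=O(k^{-2\epsilon_i})$, while the derivatives falling on $\rho(u)$, on $s(\cdots,ku)$, and on the amplitude remain $O(k^{\,c})$ for a fixed $c$; choosing $N$ large yields $O(k^{-\infty})$. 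The constraint $\epsilon_i<1/6$ is exactly what is needed so that errors from the Taylor expansion in step (i) (which are $O(\vartheta^3)=O(k^{3\epsilon_i-3/2})$, smaller than the main term $k^{2\epsilon_i-1}$ precisely when $\epsilon_i<1/2$, and in fact $1/6$ is imposed to leave room for later rescaling arguments in the proof of Theorem~\ref{thm:diagonal}) do not spoil the lower bound. Applying a smooth partition $\varrho_1(k^{1/2-\epsilon_1}\vartheta)+(1-\varrho_1(\cdots))=1$ and likewise in $\theta$, the complementary pieces are $O(k^{-\infty})$ by the above, which is the assertion of the lemma.

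The main obstacle I expect is step (ii), establishing the uniform lower bound $\mathrm{dist}_X(\cdots)\gtrsim|\vartheta|+|\theta|$ with a constant independent of $(g,\vartheta,\theta)$ on the (shrinking-with-$k$) support of the bump functions, together with controlling the $\tau k^{-1/2}\Upsilon(m)$ perturbation so that it genuinely contributes only at order $k^{-1}$ to $|\partial_u\Psi|$ rather than cancelling the main term. Concretely one must rule out that $-\vartheta(\mathrm{Ad}_g\beta)_X(x)-\theta\partial_\theta$ is nearly parallel to $\Upsilon(m)$ (lifted horizontally) in a way that could produce destructive interference; here the fact that $\Upsilon(m)=J_m(\Phi(m)_M(m))$ is an outer-oriented \emph{horizontal} direction transverse to $X_{\nu}$ while $\partial_\theta$ is vertical and $(\mathrm{Ad}_g\beta)_X(x)$ lies (to leading order) tangent to the $G\times S^1$-orbit through $x$, and hence tangent to $X_{\nu}$, keeps these directions a bounded angle apart, so the perturbation really is a lower-order nuisance. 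The rest is routine integration by parts on a fixed compact $u$-interval, exactly parallel to Proposition~\ref{prop:decayorbit}.
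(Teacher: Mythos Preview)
Your proposal is correct and follows essentially the same route as the paper. The paper's own proof is only a two-sentence pointer back to Proposition~\ref{prop:decayorbit}: one integrates by parts in $u$, using that $\partial_u\Psi=\psi(\cdots)$ and the quadratic lower bound $\Im\psi\ge D\,\mathrm{dist}_X^2$, exactly as you outline in steps (i)--(iv). Your write-up simply supplies the details the paper omits, in particular the positive-definiteness of the quadratic form $\vartheta^2\lVert(\mathrm{Ad}_g\beta)_M(m)\rVert^2+(\theta+\vartheta\langle\Phi(m),\mathrm{Ad}_g\beta\rangle)^2$ (which is what makes your claimed bound $\mathrm{dist}_X^2\gtrsim\vartheta^2+\theta^2$ work), and the observation that the $\tau k^{-1/2}$ perturbation contributes only $O(k^{-1})$.
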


Applying the rescaling $\vartheta \mapsto \vartheta/ \sqrt{k}$ and $\theta \mapsto \theta/ \sqrt{k}$, we recover
\begin{eqnarray}
\label{eqn:Piknu and szego2}
\lefteqn{
\Pi_{(k {\nu},\,k) }
 (x_{k,\tau} , x_{k,\tau} )
} \\
&\sim & \frac{k\,\nu }{(2\pi)^2}\, \int_0^{+\infty}\,\mathrm{d}u \,
\int_{-\infty}^{+\infty} \,\mathrm{d}\vartheta \,
\int_{G/T} \, \mathrm{d}V_{G/T} (gT)\,\int_{-\infty}^{+\infty}\,\mathrm{d}\theta\,
\left[e^{\imath\, k\, \Psi_{x_{k,\tau}} (u, \theta ,\vartheta, g\,T)} \right.
\nonumber\\
&& \cdot \rho  ( u )\cdot \varrho_1\left(k^{-\epsilon_1}\,\vartheta\right)  \cdot \varrho_2\left(k^{-\epsilon_2}\,\theta\right)\cdot
\left( e^{\imath\vartheta/\sqrt{k}}-e^{-\imath\,\vartheta/\sqrt{k}}\right)\nonumber\\
&&\left.
\cdot 
s\left( \widetilde{\mu}_{g e^{- \vartheta \beta/\sqrt{k}} g^{-1}}( r_{\theta/\sqrt{k}}(x_{k,\tau}) ),
x_{k,\tau}, k\, u \right)
\right] \nonumber,
\end{eqnarray}
where we have set
\begin{equation*}
 %\label{eqn:defn di Psik}
\Psi_{x_{k,\tau}} (u, \theta ,\vartheta, g\,T)  := 
u\, \psi \left( \widetilde{\mu}_{g e^{- \vartheta \beta/\sqrt{k}} g^{-1}}( r_{\theta/\sqrt{k}}(x_{k,\tau}) ),
x_{k,\tau} \right) -\frac{\vartheta}{\sqrt{k}}\, \nu-\frac{\theta}{\sqrt{k}}\, .
\end{equation*}

Let us make the phase $\Psi_{x_{k,\tau}} $ more explicit by making use of Corollary $2.2$ of \cite{pao-IJM}. With $m_x=\pi(x)$, we have
\begin{align*}
\widetilde{\mu}_{g e^{-\imath \vartheta B/\sqrt{k}} g^{-1}}( r_{\theta/\sqrt{k}}(x_{k,\tau}) )
&=  \widetilde{\mu}_{e^{-\vartheta \mathrm{Ad}_g( \beta)/\sqrt{k}} }( r_{\theta/\sqrt{k}}(x_{k,\tau}) ) \\
& = 
x + \left (  \Theta_k (\tau,\theta,\vartheta , g\,T), 
 \,\mathrm{V}_k(\tau,\, \vartheta , gT)
\right), \nonumber
\end{align*}
where (for appropriate $R_2$ and $R_3$ vanishing respectively at second and third order at the origin)
\begin{align*}
\Theta_k ( \tau,\theta,\vartheta , g\,T) 
& : = \frac{1}{\sqrt{k}}\,\theta+
\frac{1}{\sqrt{k}} \, 
\vartheta \cdot
\Big\langle\Phi (m),  \mathrm{Ad}_g( \beta) \Big\rangle 
\\
& + \frac{1}{k}\,\tau\cdot \vartheta\cdot \omega_{ m } \left( (\mathrm{Ad}_g( \beta))_M (m), \Upsilon(m) \right)  \nonumber\\ 
&
+R_3 \left(\frac{1}{\sqrt{k}} \, \vartheta,\frac{1}{\sqrt{k}} \, \theta, \frac{1}{\sqrt{k}}\,\tau \right),
\nonumber
\end{align*}
and 
\begin{align*}
 \label{eqn:defn of Vk}
\mathrm{V}_k(\tau,\vartheta , gT)
 &: =
\frac{\tau}{\sqrt{k}}\,\Upsilon(m)- \frac{1}{\sqrt{k}}\vartheta \,(\mathrm{Ad}_g\left(\beta\right))_M(m_x) \\
&+R_2 \left(\frac{1}{\sqrt{k}} \, \vartheta,\frac{1}{\sqrt{k}} \, \theta ,\frac{1}{\sqrt{k}}\,\tau \right). \nonumber
\end{align*}
We shall use the abridged notation $\Theta_k$ and $V_k$. In view of \S 3 of \cite{sz} (see especially (65)), we obtain the following expression for the phase of \eqref{eqn:Piknu and szego2}:
\begin{eqnarray}
 \label{eqn:Psi_kexpanded}
%\lefteqn{ 
\Psi_{x_{k,\tau}} (u, \tau ,\vartheta, g\,T) 
%} \\
& = &   \imath \, u \,  \left[ 1 - e^{ \imath \, \Theta_k}\right]  -\frac{\vartheta}{\sqrt{k}}\, \nu -\frac{\theta}{\sqrt{k}}\,  
-\imath \, u  \, \psi_2 \left( V_k,
\tau\,\Upsilon(m)\right) \nonumber \\
&& +R_3\left(\frac{1}{\sqrt{k}} \vartheta ,\frac{1}{\sqrt{k}} \, \theta,\frac{1}{\sqrt{k}}\,\tau\right).
\end{eqnarray}

Before stating the next lemma let us introduce a one more piece of notation. Let us set
\begin{equation}
\label{eqn:defn di A}
 A (\theta, \vartheta , g\,T)
:= 
%(\theta_1 -\theta_2 ) +  
\theta+ \vartheta \cdot 
\left\langle\Phi (m_x),  \mathrm{Ad}_g( \beta) \right\rangle .
\end{equation}
By a few computations, we obtain the following.

\begin{lem}
 \label{lem:expansion of Psik}
We have
\begin{align*}
 \Psi_{x_{k,\tau}} (u, \tau,\theta,\vartheta, g\,T)  : =\, &
\frac{1}{ \sqrt{k} } \, \mathcal{G}_{\tau} (u, \theta,\vartheta, g\,T)
+ \frac{ 1 }{ k } \, \mathcal{D}  (u, \tau ,\theta,\vartheta, g\,T)\\
&   + R_3 \left(\frac{1}{\sqrt{k}} \, \vartheta , \frac{1}{\sqrt{k}} \, \theta, \frac{1}{\sqrt{k}}\,\tau \right),
\end{align*}
where 
\begin{align*}
&\mathcal{G}(u,\theta, \vartheta, g\,T)
 \\
 &\quad= \, u \, A(\theta, \vartheta , g\,T) -\vartheta \, \nu -\theta\, 
\\
 &\quad=\,
\vartheta \cdot \left[  \left\langle \Phi( m_x ), \mathrm{ Ad }_g ( \beta ) \right\rangle \cdot u - \nu \right] +\theta\cdot \left(u-1\right),
\end{align*}
and
\begin{align*}
%\lefteqn{ 
\mathcal{D}_{\tau} (u ,\theta,\vartheta, g\,T)
 =&  \frac{\imath\, u}{2} \cdot \left( \,A(\theta, \vartheta , g\,T)^2 + \vartheta^2\,\lVert (\mathrm{Ad}_g(\beta))_M(m_x)\rVert^2 \right) \\
 &+2\, \tau\cdot \vartheta\cdot \omega_{ m_x } \left( (\mathrm{Ad}_g( \beta))_M (m), \Upsilon(m) \right) \,.
\end{align*}
with $A$ as in equation \eqref{eqn:defn di A}.
\end{lem}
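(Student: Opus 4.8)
The plan is to obtain the expansion by substituting the Taylor expansions of $\Theta_k$ and $V_k$ recorded just above into the explicit form \eqref{eqn:Psi_kexpanded} of the phase $\Psi_{x_{k,\tau}}$ and then collecting the terms that carry an explicit factor $k^{-1/2}$, respectively $k^{-1}$, the rest being absorbed into the cubic remainder. Note first that on the region left after Lemma \ref{lem:shrinking support vartheta} --- the support of $\varrho_1\bigl(k^{-\epsilon_1}\vartheta\bigr)\,\varrho_2\bigl(k^{-\epsilon_2}\theta\bigr)$, with $\epsilon_1,\epsilon_2\in(0,1/6)$ and $(gT,u)$ in the fixed compact supports of $\varrho_1$ and $\rho$ --- one has $|\vartheta|<2\,k^{\epsilon_1}$ and $|\theta|<2\,k^{\epsilon_2}$, so that $\Theta_k$ and $V_k$ are $O\bigl(k^{\max(\epsilon_1,\epsilon_2)-1/2}\bigr)=o(1)$ uniformly; this legitimizes the term-by-term substitution.

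Next I would expand $e^{\imath\Theta_k}=1+\imath\Theta_k-\tfrac12\Theta_k^2+R_3(\Theta_k)$, whence $\imath u\bigl(1-e^{\imath\Theta_k}\bigr)=u\,\Theta_k+\tfrac{\imath u}{2}\Theta_k^2+u\,R_3(\Theta_k)$. Inserting the expansion of $\Theta_k$ and recalling the summand $-\tfrac{\vartheta}{\sqrt k}\nu-\tfrac{\theta}{\sqrt k}$ of \eqref{eqn:Psi_kexpanded}, the part of $\Psi_{x_{k,\tau}}$ carrying the factor $k^{-1/2}$ is $\tfrac1{\sqrt k}\bigl(u\,A(\theta,\vartheta,gT)-\vartheta\,\nu-\theta\bigr)$, which, after rewriting $A$ via \eqref{eqn:defn di A}, is precisely $\tfrac1{\sqrt k}\,\mathcal{G}$. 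The part carrying the factor $k^{-1}$ collects three contributions: the leading term $\tfrac{\imath u}{2k}\,A(\theta,\vartheta,gT)^2$ of $\tfrac{\imath u}{2}\Theta_k^2$; the order-$k^{-1}$ term of $u\,\Theta_k$, which is the $\tau\vartheta$-term built into $\Theta_k$; and the term $-\imath u\,\psi_2\bigl(V_k,\tfrac{\tau}{\sqrt k}\Upsilon(m)\bigr)$. Since $\psi_2$ is the rescaled Szeg\H{o} phase, homogeneous of degree two and vanishing on the diagonal, substituting $V_k=\tfrac{\tau}{\sqrt k}\Upsilon(m)-\tfrac{\vartheta}{\sqrt k}(\mathrm{Ad}_g(\beta))_M(m_x)+R_2$ turns the last contribution into $\tfrac1k$ times a quadratic expression in $\vartheta$ and $\tau\vartheta$ with coefficients $\lVert(\mathrm{Ad}_g(\beta))_M(m_x)\rVert^2$ and $\omega_{m_x}\bigl((\mathrm{Ad}_g(\beta))_M(m),\Upsilon(m)\bigr)$ (the signs and normalizations being those fixed by the conventions for $\psi_2$ in \cite{sz}). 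Summing the three pieces yields $\tfrac1k\,\mathcal{D}_\tau$ in the stated form.

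The step I expect to be the only genuine obstacle is the control of the remainder. Everything not accounted for above --- $u\,R_3(\Theta_k)$, the $R_3$ and $R_2$ tails already present inside $\Theta_k$ and $V_k$ (the latter entering $\psi_2$ only through products with $O(k^{-1/2})$ factors), and the subleading parts of $\tfrac{\imath u}{2}\Theta_k^2$ and of $-\imath u\,\psi_2$ --- is, uniformly in $(gT,u)$ over the compact supports, of the form $R_3\bigl(\tfrac{\vartheta}{\sqrt k},\tfrac{\theta}{\sqrt k},\tfrac{\tau}{\sqrt k}\bigr)$. When the phase is multiplied by $k$ inside the oscillatory integral \eqref{eqn:Piknu and szego2}, such a term has size $O\bigl(k\cdot k^{-3/2}\cdot k^{3\max(\epsilon_1,\epsilon_2)}\bigr)=O\bigl(k^{3\max(\epsilon_1,\epsilon_2)-1/2}\bigr)$, which is $o(1)$ exactly because $\epsilon_1,\epsilon_2<1/6$; hence it is harmless for the subsequent stationary-phase analysis and may be kept in the remainder. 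Once this uniform bound is secured, the identification of $\mathcal{G}$ and $\mathcal{D}_\tau$ is a routine bookkeeping of Taylor coefficients.
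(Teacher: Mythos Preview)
Your proposal is correct and follows exactly the route the paper takes: the paper merely writes ``By a few computations, we obtain the following,'' and what you have spelled out --- expanding $e^{\imath\Theta_k}$ to second order, reading off the $k^{-1/2}$ and $k^{-1}$ coefficients from $u\Theta_k$, $\tfrac{\imath u}{2}\Theta_k^2$, and $-\imath u\,\psi_2(V_k,\,\cdot\,)$, and bounding the cubic remainder using $\epsilon_1,\epsilon_2<1/6$ --- is precisely that computation. There is no alternative argument to compare.
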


From (\ref{eqn:Piknu and szego2}) and Lemma \ref{lem:expansion of Psik}, we conclude that
\begin{align}
\label{eqn:Piknu and szego3}
&\Pi_{(k {\nu},\,k) }
(x_{k,\tau} , x_{k,\tau} )
\,
\sim  \frac{k\,\nu }{(2\pi)^2}\, \int_0^{+\infty}\,\mathrm{d}u \,
\int_{-\infty}^{+\infty} \,\mathrm{d}\theta\,
\int_{-\infty}^{+\infty} \,\mathrm{d}\vartheta \,\int_{G/T} \, \mathrm{d}V_{G/T} (gT)
\\
&\left[e^{\imath\, \sqrt{k}\, \mathcal{G}(u, \theta, \vartheta, g\,T) } \cdot e^{\imath \, \mathcal{D}_{\tau}  (u,\theta,\,\vartheta, g\,T)}\cdot e^{\imath\,k\cdot R_3\left(\frac{1}{\sqrt{k}} \, \vartheta ,\frac{1}{\sqrt{k}} \, \theta ,  \frac{1}{\sqrt{k}}\,\tau \right)} \cdot \varrho_1\left(k^{-\epsilon_1}\,\vartheta\right) \cdot \varrho_2\left(k^{-\epsilon_2}\,\theta\right) 
\right.
\nonumber \\
& \left.\quad\cdot \rho  ( u )  \cdot \left( e^{\imath\vartheta/\sqrt{k}}-e^{-\imath\,\vartheta/\sqrt{k}}\right)
\cdot s\left( \widetilde{\mu}_{g e^{-\vartheta \beta/\sqrt{k}} g^{-1}}( r_{\theta/\sqrt{k}}(x_{\tau,k }) ),
x_{k,\tau }, k\, u \right)
\right]\,. \nonumber
\end{align}

We can make (\ref{eqn:Piknu and szego3}) yet more explicit. Let $h_m \,T\in G/T$ be as in (\ref{eqn:defn of hm}), and operate the change of variable $g\,T \mapsto h_m \,g\,T$ in $G/T$; then the phase is
\begin{align}
 \label{eqn:Gamma riscritta}
&\mathcal{G}(u, \theta,\vartheta, h_m g\,T) \\
 &\quad = \vartheta \cdot   \left[ 
%\theta_1 -\theta_2 + 
u\cdot \left \langle \imath\,g^{-1}
\begin{pmatrix}
 \lambda (m_x) & 0 \\
0 & -\lambda (m_x)
\end{pmatrix}
\, g
,  \beta  \right\rangle - \nu  \right]+\theta\cdot\left(u-1 \right)
  \nonumber\\
&\quad =  \nu\cdot\vartheta \cdot \left[ 
%(\theta_1 - \theta_2)  +
u \cdot (\lvert \alpha\rvert^2-\lvert \beta\rvert^2) - 1\right]+\theta\cdot\left(u-1 \right)
, \nonumber
\end{align}
where we use that $\lambda(m_x)= \nu/2$.

The phase \eqref{eqn:Gamma riscritta} has critical points only if $\lvert \alpha\rvert^2-\lvert \beta\rvert^2=1$, which corresponds to a point $\bar{g}T$ in $G/T$. We can identify $G/T$ with $S^2$ via the map $f$ defined in Section \S\ref{sctn:local coordinates G/T}. Hence, the image $f(\bar{g}T)\in S^2\subseteq \mathbb{R}^3$ of the critical point is $(0,\,0,\,1)$ and we can parametrize a neighbourhood of $\bar{g}T$ in $G/T$ by projecting a small neighbourhood of $(0,\,0,\,1)$ in $S^2$ on the plane $x_3=0$. The volume form is now
\[f^*\left(\mathrm{dV}_{G/T}\right)(x_1,\,x_2)=\mathcal{V}(x_1,\,x_2)\,\mathrm{d}x_1\,\mathrm{d}x_2\,. \]

\begin{lem}  Let us denote with $(r,\,\delta)$ the polar coordinates in a neighborhood of the origin in $\mathbb{R}^2$. The function $\mathcal{V}$ is rotationally invariant and thus we shall write
	\[\mathcal{V}(r)=D_{G/T}\cdot \mathcal{S}_{G/T}(r) \]
where $\mathcal{S}_{G/T}(r)=1+\sum_j s_j\,r^{2\,j}$ and $D_{G/T}$ is as in \eqref{eqn:DG/T}.
\end{lem}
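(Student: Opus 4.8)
The plan is to trade the statement for the uniqueness of the rotation-invariant measure on the round sphere. Recall from \S\ref{sctn:local coordinates G/T} that $f$ identifies $G/T$ with the unit sphere $S^2\subset\mathbb R^3$ $G$-equivariantly: the $SU(2)$-action on $G/T$ by left translation corresponds, through the covering $SU(2)\to SO(3)$, to the rotation action of $SO(3)$ on $S^2$, and the critical coset $\bar gT$ (the one with $|\alpha|^2-|\beta|^2=1$) is carried to the north pole $(0,0,1)$. First I would record that the isotropy subgroup of $G$ at $\bar gT$ is exactly the maximal torus $T$ --- the elements of $SU(2)$ fixing the line $\mathbb C\cdot(1,0)$ being precisely the diagonal ones --- and that, viewed on $S^2$ through $f$, this $T$ acts as the full circle of rotations about the $x_3$-axis. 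After orthogonally projecting a neighbourhood of the north pole to the plane $x_3=0$, these become the ordinary rotations of $\mathbb R^2$ about the origin. Since $\mathrm{dV}_{G/T}$ is $G$-invariant it is in particular $T$-invariant, so its coordinate density $\mathcal V(x_1,x_2)$ is rotation-invariant, i.e.\ $\mathcal V=\mathcal V(r)$ with $r^2=x_1^2+x_2^2$; this is the first assertion.

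For the explicit form, the point is that $SU(2)\to SO(3)$ is onto and $SO(3)$ acts transitively on $S^2$, so $f_*\mathrm{dV}_{G/T}$ is an $SO(3)$-invariant measure on $S^2$ and therefore a constant multiple of the round area form $\sigma_{S^2}$. In the chart under consideration $\sigma_{S^2}=(1-x_1^2-x_2^2)^{-1/2}\,\mathrm dx_1\,\mathrm dx_2$ on the upper hemisphere, so $\mathcal V(r)$ equals that same constant times $(1-r^2)^{-1/2}=1+\sum_{j\ge1}\binom{2j}{j}4^{-j}\,r^{2j}$, which has exactly the asserted shape $D_{G/T}\cdot\mathcal S_{G/T}(r)$ with $\mathcal S_{G/T}(0)=1$ and only even powers of $r$. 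To identify the leading constant with $2\pi/V_3$ I would compare total masses, $\int_{G/T}\mathrm{dV}_{G/T}=4\pi\cdot(\text{that constant})$, and evaluate the left-hand side from the normalization of $\mathrm{dV}_{G/T}$ fixed by the Weyl integration formula in \S\ref{sctn:weylint&char} --- equivalently, by writing $\mathrm{dV}_{G/T}$ in the $(\theta,\delta)$-coordinates of \S\ref{sctn:local coordinates G/T} and integrating --- which returns the value of \eqref{eqn:DG/T}.

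The conceptual content is all in the first two paragraphs: rotational invariance is handed to us by the isotropy action at $\bar gT$, and the shape $(1-r^2)^{-1/2}$ is forced because an $SO(3)$-invariant density on $S^2$ must be proportional to $\sigma_{S^2}$. The only delicate step is the bookkeeping that makes the leading constant come out to precisely $2\pi/V_3$; this is purely a matter of tracking the normalizations of $\mathrm{dV}_G$, $\mathrm{dV}_T$ and $\mathrm{dV}_{G/T}$ used in the Weyl integration formula, rather than a genuine obstacle.
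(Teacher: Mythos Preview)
The paper states this lemma without proof, so there is no argument to compare against. Your proposal is correct and supplies exactly what the paper omits: the rotational invariance follows because the isotropy $T$ at the identity coset $\bar gT=eT$ acts on $S^2$ (via $f$) as rotations about the $x_3$-axis, and the left-invariant measure $\mathrm{dV}_{G/T}$ is in particular $T$-invariant; the explicit shape $(1-r^2)^{-1/2}$ then drops out because $f$ intertwines the left $G$-action with the $SO(3)$-action on $S^2$ (as one checks by writing $f(g)$ as the adjoint orbit map $gT\mapsto \mathrm{Ad}_g(\sigma_3)$), so $f_*\mathrm{dV}_{G/T}$ must be a constant multiple of the round area form. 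Your remark that pinning down the constant $D_{G/T}=2\pi/V_3$ is a normalization exercise with the Weyl integration formula is accurate; it is not a gap but should be carried out explicitly if you want a complete proof, since the paper's conventions for $\mathrm{dV}_G$, $\mathrm{dV}_T$, $\mathrm{dV}_{G/T}$ are only implicit in \S\ref{sctn:weylint&char}.
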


Let us set $\kappa(r):=\lvert \alpha\rvert^2-\lvert \beta\rvert^2=\sqrt{1-r^2}$. Thus, the phase can be rewritten in the following form
\begin{align}
\mathcal{G}(u, \theta,\vartheta, h_m g\,T)&= \nu\cdot\vartheta \cdot \left( \kappa(r)\, u  - 1\right) +\theta\cdot\left(u-1 \right) \, ,
\nonumber
\end{align}
where
\begin{align*} 
{\kappa}(r)
 =  \sum_{j\geq0} b_j\, r^{2\,j} \,;
\end{align*}
notice that $b_0=1$ and $b_1=-1$.

We can rewrite the oscillatory integral \eqref{eqn:Piknu and szego3} in the following way:
\begin{eqnarray}
\label{eqn:Piknu and szego4}
\lefteqn{
	\Pi_{(k {\nu},\,k) }
	(x_{k,\tau} , x_{k,\tau} )
} \\
&\sim & \frac{k\,\nu }{4\pi^2}\, \int_0^{+\infty}\,\mathrm{d}u \,
\int_{-\infty}^{+\infty} \,\mathrm{d}\theta \,\int_{-\infty}^{+\infty} \,\mathrm{d}\vartheta \,\int_{0}^{2\,\pi} \, \mathrm{d}\delta\,\int^{+\infty}_{0} \, \mathrm{d}r\,\,r\,\mathcal{V}(r)
\nonumber\\
&&\left[e^{\imath\, \sqrt{k}\,\mathcal{G}(u, \theta,\vartheta, g\,T) } \cdot e^{\imath \, \mathcal{D}_{\tau} (u ,\theta, \vartheta, g\,T)}\cdot  e^{\imath\,k\cdot R_3\left(\frac{1}{\sqrt{k}} \, \theta,\frac{1}{\sqrt{k}} \, \vartheta, \frac{1}{\sqrt{k}}\,\tau\right)}\right. \nonumber\\
&&\quad \cdot \varrho_1\left(k^{-\epsilon_1}\,\vartheta\right) \cdot \varrho_2\left(k^{-\epsilon_2}\,\theta\right) \cdot \rho  ( u )
\nonumber \\
&& \left.\quad  \cdot \left( e^{\imath\vartheta/\sqrt{k}}-e^{-\imath\,\vartheta/\sqrt{k}}\right)
\cdot s\left( \widetilde{\mu}_{g e^{-\vartheta \beta/\sqrt{k}} g^{-1}}(r_{\theta/\sqrt{k}}(x_{k,\tau}) ),
x_{k,\tau}, k\, u \right)
\right]\,. \nonumber
\end{eqnarray}
where (with abuse of notation) $g=h_m\,g(w)$ by the obvious change of variables.

We can now expand the integrand of \eqref{eqn:Piknu and szego4} in decreasing half integer powers of $k$. First, let us remark that
\begin{align*}  e^{\imath\,\vartheta/\sqrt{k}}-e^{-\imath\,\vartheta/\sqrt{k}} %& =  \frac{2\,\imath}{\sqrt{ k }}\cdot \vartheta \cdot \sum_{j=0}^{+\infty} \frac{ (-1) ^j}{(2j+1)!} \cdot \frac{ \vartheta ^{ 2j }}{ k^j } \\ & 
=  \frac{2\,\imath}{\sqrt{k }}\cdot \vartheta \cdot \left[ 1 + R_2\left(\frac{\vartheta}{ \sqrt{k} } \right) \right]\,.  
\end{align*}
Then, working in Heisenberg local coordinates, Taylor expansion yields an asymptotic expansion 
\begin{equation*}
% \label{eqn:s term}
s\left( \widetilde{\mu}_{g e^{-\imath \vartheta B/\sqrt{k}} g^{-1}}(r_{\theta/\sqrt{k}} (x) ),
x, k u \right) \sim 
\left( \frac{k u}{\pi} \right)^d \cdot \left[ 1 
+ R_1\left(\frac{\vartheta}{ \sqrt{k} } ,\frac{\theta}{ \sqrt{k} },\frac{\tau}{ \sqrt{k} }, \frac{r}{ \sqrt{k} }\right) \right].
\end{equation*}
As a consequence, we have an asymptotic expansion 
\begin{align*}
% \label{eqn:product asymptotic expansion}
&e^{\imath\,k\cdot R_3\left(\frac{1}{\sqrt{k}} \,\vartheta, \frac{1}{ \sqrt{k} }\,\theta, \frac{1}{ \sqrt{k} }\,\tau,\frac{1}{\sqrt{k}} \,r  \right)} 
\cdot \left( e^{\imath\,\vartheta/\sqrt{k}}-e^{-\imath\,\vartheta/\sqrt{k}}\right)
\cdot s\left( \widetilde{\mu}_{g e^{- \vartheta \beta/\sqrt{k}} g^{-1}}(r_{\theta/\sqrt{k}} x ),
x, k\, u \right)
\\
&\qquad \sim \, \left( \frac{k \, u}{\pi} \right)^d \cdot \frac{2\,\imath}{\sqrt{ k }}\cdot \vartheta \cdot 
\left[1+\sum_{j\ge 1} k^{-j/2} \, P_j(x,u; \vartheta,\theta,r )\right], \nonumber
\end{align*}
where $P_j(x,u;\vartheta,\theta)$ is a polynomial of degree $\le 3j$ and parity $(-1)^j$.

Therefore, the amplitude in (\ref{eqn:Piknu and szego4}) is given by an asymptotic expansion in descending half-integer powers of $k$, and the asymptotic expansion for the integrand may be integrated terms by term. By a few computations, one sees that the dominant term of the resulting expansion for (\ref{eqn:Piknu and szego4}) is the dominant term of the expansion for the following oscillatory integral:
\begin{align} \label{eq:final}
&\frac{{k}\,\nu }{4\pi^2}\,\left( \frac{k}{\pi} \right)^d\,\int_{\infty}^{+\infty} \mathrm{d}\vartheta \,\int_{0}^{2\,\pi} \, \mathrm{d}\delta\, \int^{+\infty}_{0}\,\mathrm{d}r\, \\
& \qquad\,\left[e^{-\imath\,\sqrt{k}\,\nu\,\vartheta}\cdot \frac{2\imath\,\vartheta\,}{ \sqrt{k}}\cdot\varrho_1\left(k^{-\epsilon_1}\,\vartheta\right)\cdot I_k(\vartheta,r) \,\cdot\mathcal{V}(r)\,r\right]\,; \nonumber
\end{align}
where we have set
\begin{align*}
I_k(\vartheta,r)= &\int_0^{+\infty}\,\mathrm{d}u \,
\int_{-\infty}^{+\infty} \,\mathrm{d}\theta \\
&
\left[  \varrho_2\left(k^{-\epsilon_2}\,\theta\right) 
\cdot \rho  ( u ) \cdot  u^{d}\cdot e^{\imath\, \sqrt{k}\, \Gamma(u, \theta, \vartheta) } \cdot e^{\imath \, \mathcal{D}_{\tau} (u ,\theta, \vartheta, h_mg\,T)}
\right]
\end{align*}
and
\begin{align*}
\Gamma(u, \theta, \vartheta)=\theta\cdot(u-1)+\nu\cdot \vartheta\cdot \kappa(r)\cdot u\,.
\end{align*}
The unique critical point of the phase ${\Gamma}$ is $P=(1,\,-(\nu\cdot\kappa(r)\cdot\vartheta))$ and its Hessian is non-degenerate in $P$. Thus, by the Stationary Phase Lemma, we have an asymptotic expansion for $I_k(\vartheta,r)$ in decreasing half-integer powers of $k$:
\begin{align} \label{eq:Ik}
I_k(\vartheta,r)\sim e^{\imath\,\sqrt{k}\,\nu\,\vartheta\,\kappa(r) }\cdot\frac{2\,\pi\,}{\sqrt{k}}\,e^{B_{\tau}(\vartheta,r,\delta)}+O(k^{-1})\,,
\end{align}
where
\begin{align*}
B_{\tau}(\vartheta,r)=&-\frac{1}{2}\,\lVert \left(\mathrm{Ad}_{h_mg}(\beta)\right)_M(m) \rVert^2\,\vartheta^2 \\ &-2\,\imath\cdot \tau\,\vartheta\,\omega_m(\left(\mathrm{Ad}_{h_mg}(\beta)\right)_M(m),\Upsilon(m))  \,.
\end{align*}

Inserting the equation \eqref{eq:Ik} in the oscillatory integral \eqref{eq:final}, we obtain the following expression for the leading term of \eqref{eq:final}:
\begin{align} \label{eq:last}
&\imath\,\frac{\nu}{\pi}\,\left( \frac{k}{\pi} \right)^d\,\int_{0}^{2\,\pi} \, \mathrm{d}\delta\, \int^{+\infty}_{0}\,\mathrm{d}r\,  \,\int_{-\infty}^{+\infty} \mathrm{d}\vartheta\, \\
&\qquad \left[ e^{\imath\,\sqrt{k}\,\nu\,\vartheta\,(\kappa(w)-1) }\cdot e^{B_{\tau}(\vartheta,r)} \cdot\vartheta\cdot \varrho_1\left(k^{-\epsilon_1}\,\vartheta\right)\cdot\mathcal{V}(r)\,r\right]\,. \nonumber
\end{align}

Finally, in order to compute the integral \eqref{eq:last}, let us introduce one more piece of notation setting
\[\lambda_{\tau}(r,\delta) := \,\lVert \left(\mathrm{Ad}_{h_mg}(\beta)\right)_M(m) \rVert^2  \]
and
\[\xi_{\tau}(r,\delta)= 2\cdot \tau\, \omega_m(\left(\mathrm{Ad}_{h_mg}(\beta)\right)_M(m),\Upsilon(m)) -\,\sqrt{k}\,\nu\,(\kappa(w)-1)\,. \]
Thus, the integral \eqref{eq:last} can be written as
\begin{align} \label{eq: lastre}
\imath\,\frac{\nu\, }{\pi}\,\left( \frac{k}{\pi} \right)^d\,\int_{0}^{2\,\pi} \, \mathrm{d}\delta\, \int^{+\infty}_{0}\,\mathrm{d}r\,  \left[J_k(r)\cdot\mathcal{V}(r)\,r\right]
\end{align}
where $J_k(r,\delta)$ is the following Gaussian integral
\begin{align} \label{eq:Jk}
J_k(r,\delta)&=\int_{-\infty}^{+\infty} \, e^{-\frac{1}{2}\,\lambda_{\tau}(r,\delta)\,\vartheta^2-\imath\,\xi_{\tau}(r,\delta)\,\vartheta }\cdot \vartheta  \,\mathrm{d}\vartheta \\
&=\sqrt{2\,\pi}\,\frac{(-\imath)}{\lambda_{\tau}(r,\delta)^{3/2}}\cdot \xi_{\tau}(r,\delta)\,e^{-\frac{1}{2\,\lambda_{\tau}(r,\delta)}\,\xi_{\tau}(r,\delta)^2}\,.\nonumber
\end{align}
Inserting \eqref{eq:Jk} in \eqref{eq: lastre} we obtain the following expression
\begin{align} \label{eq:finalPi}
\frac{\sqrt{2}\,\nu\,}{\sqrt{\pi}}\,\left( \frac{k}{\pi} \right)^d\,\int_{0}^{2\,\pi}\,\mathrm{d}\delta\, \int^{+\infty}_{0}\,\mathrm{d}r\,\left[\frac{\xi_{\tau}(r,\delta)}{\lambda_{\tau}(r,\delta)^{3/2}}\cdot e^{-\frac{1}{2\,\lambda_{\tau}(r,\delta)}\,\xi_{\tau}(r,\delta)^2} \cdot\mathcal{V}(r)\cdot r\right] \,.
\end{align}

Setting $\tau=0$, we obtain the desired asymptotic along the diagonal. Explicitly, let us pose
\[S(r)=\kappa(r)-1= -r^2+\sum_{j\geq 2} b_j\, r^{2\,j} \]
and expanding $\mathcal{V}$, $\xi_0$ and $\lambda_0$ in powers of $r$, one notices that the leading term is given by
\begin{align*} 
&2\,\sqrt{2\,\pi}\,D_{G/T}\cdot \frac{\nu^2\,\sqrt{k} }{\lVert (\mathrm{Ad}_{h_m}(\beta))_M(m) \rVert^3}\cdot \left( \frac{k}{\pi} \right)^d\, \int^{+\infty}_{0}\,\mathrm{d}r\, \left[ r^3\cdot e^{-\frac{1}{2\,\lVert \beta_M(m)\rVert^2}\,k\,\nu^2\,r^4} \right] \\
&= \sqrt{2}\,D_{G/T}\,\left( \frac{k}{\pi} \right)^{d-1/2}\cdot \frac{1}{\lVert (\mathrm{Ad}_{h_m}\left(\beta\right))_M(m) \rVert}+O(k^{d-3/4})\,.
\end{align*}
\end{proof}

\section{The shifting trick and the asymptotics of the new projector}

The expression \eqref{eq:finalPi} together with Theorem \ref{thm:rapid decrease} (recall that $\Upsilon$ is outer oriented) can be used to give a lower bound for the asymptotics of the dimension, but we will not dwell on these computations. Instead, we will give a more precise result using a different projector as explained in the next subsections. In the following sections $G$ will be a compact connected Lie group of real dimension $\mathrm{g}$.

\subsection{Rapidly decaying contribution}

In order to compute the asymptotics of the dimension of the corresponding isotypes we need a result concerning asymptotics near the locus $N_0=(\tilde{\Phi}\circ \tilde{\pi})^{-1}({0})$ (for the definitions see the discussion preceding Theorem \ref{cor:dimension estimate} in the Introduction). First, let us recall that, by \cite{pao-jsg0} (see especially equation $(4.15)$), for every $y\in Y$, we can write \eqref{eq:Pitilde} in the following form
\begin{equation} \label{eq:Pitilde1}
\tilde{\Pi}_{0,k}(y,\,y)=k\,\int_{-\pi}^{\pi}\mathrm{d}\theta\int_{0}^{+\infty} \mathrm{d}u\,\int_G \mathrm{dV}_G(g)\left[ e^{\imath\,k\,\Psi_{y,y}}\,\mathcal{B}_{y,y}\right]\, 
\end{equation}
where we have set 
\[
\Psi_{y,\,y}=u\,\psi\left(\tilde{\mu}_{g^{-1}}(r_{\theta}(y)),\,y\right)-\theta\,, \]
\begin{equation} \label{eq:Byy}
\mathcal{B}_{y,y}:=\varrho\left(k^{1/2-\epsilon}\,\mathrm{dist}_Y\left(\widetilde{\mu}_{g^{-1}}(r_{\theta}(y)),y\right) \right) \,\rho(u)\,\tilde{s}\left(r_{\theta}\left(\tilde{\mu}_{g^{-1}}(y)\right),y,k\,u \right)\,, \end{equation}
where $\varrho\in \mathcal{C}_0^{\infty}(\mathbb{R})$ be $\equiv 1$ on $[-1, 1]$ and $\equiv 0$ on $\mathbb{R}\setminus(-2, 2)$ ($\epsilon >0$). 

\begin{rem} Notice that in the equation (4.15) of \cite{pao-jsg0} the amplitude $\mathcal{B}_{y,\,y}$ is written as follows. Instead of $\varrho$ there is the product $b_k(g)\cdot \gamma(\theta)$; where $b_k\in \mathcal{C}^{\infty}(G)$ has support satisfying  \[\mathrm{supp}(b_k)\subseteq B_k =: \{g\in G \,:\, \mathrm{dist}_G\left(g,G_n\right) < 2\,k^{-1/3}  \}\,,  \] and $\gamma\in \mathcal{C}^{\infty}_0((-\pi,\,\pi))$ is supported in $(-\epsilon,\,\epsilon)$, where $\epsilon >0$ is very small (but independent of $k$). Under our assumption, $G_n=\{\mathrm{Id}\}\subseteq G$. In any case, arguing as in Proposition \ref{prop:decayorbit}, instead of $b_k$ and $\gamma$ we can write the bump function $\varrho$ in equation \eqref{eq:Byy}.
\end{rem}

The following lemma gives us a more precise result concerning the asymptotics in an open shrinking neighborhood of $N_0=(\tilde{\Phi}\circ \tilde{\pi})^{-1}({0})$.

\begin{lem} \label{lem: rapN0} Let us suppose that 
	\begin{equation} \label{eq: : rapN0}
	\mathrm{dist}_Y(y,Y_0) \geq C\, k^{\epsilon-1/2}  
	\end{equation}
	then $\tilde{\Pi}_{0,k}(y,y)=O(k^{-\infty})$.
\begin{proof}[Proof of Lemma \ref{lem: rapN0}] 
	Let us denote with $\mathrm{dist}_N$ the distance function on $N$; if $n~=~\tilde{\pi}(y)$, then $\mathrm{dist}_Y(y,Y_0)=\mathrm{dist}_N(n,N_0)$. On the other hand, since ${0}$ is a regular value for $\tilde{\Phi}$, we have
	\[\lVert \tilde{\Phi}(n)\rVert \geq C\,k^{\epsilon-1/2}\,; \]
	as an easy consequence, one has the following lemma (see the proof of Proposition \ref{prop:decrXnuj}, especially equations \eqref{eq:dpsi}, \eqref{eq:xtheta} and \eqref{eq:varthetadiff}). 
	\begin{lem} \label{lem: rapdecph} If \eqref{eq: : rapN0} holds, for any $u\in [1/(2D),\,2D]$ and $k \gg 0$,
	\[\lVert \mathrm{d}^{(\xi)}_{(u,g,\theta)}\Psi_{y,y} \rVert \geq C\,k^{\epsilon-1/2} \]
	on the support of \eqref{eq:Pitilde1}. 
	\end{lem}

Consider the oscillatory integral \eqref{eq:Pitilde1}, it is supported in a small neighborhood of the identity $G$. Let $\exp_G \,:\, \mathfrak{g} \rightarrow G$ be the exponential map, and let $U \subseteq \mathfrak{g}$ be a suitably small open neighborhood of the origin ${0} \in \mathfrak{g}$, over which $\exp_G$ restricts to
a diffeomorphism. We may express the integration in $\mathrm{dV}_G$ using the exponential chart. Let us introduce the following differential operator
\[P= \sum_{h=1}^g \frac{\partial_{\xi_h}\Psi_{y,y}}{\lVert \mathrm{d}^{(\xi)}_{(u,g,\theta)}\Psi_{y,y}  \rVert^2 }\,\frac{\partial}{\partial \xi_h}\,, \]
so that 
\begin{equation}\frac{1}{\imath\,k}\,P\left(e^{\imath k\,\Psi_{y,y}} \right)=e^{\imath k\,\Psi_{y,y}}\,. \label{eq:identity} \end{equation}
Thus, inserting \eqref{eq:identity} in \eqref{eq:Pitilde1} and integrating by parts, we obtain 
\begin{align} \label{eq:intbyparts}
&\int_U e^{\imath\, k\,\Psi_{y,y}}\,\mathcal{B}_{y,y}\, \mathrm{d} {\xi} \\
&\qquad =\frac{1}{\imath\,k}\,\sum_{h=1}^g \int_U   \frac{\partial_{\xi_h}\Psi_{y,y}}{\left\lVert \mathrm{d}^{({\xi})}_{(u,g,\theta)}\Psi_{y,y}  \right\rVert^2 }\,\frac{\partial}{\partial \xi_h} \left[e^{\imath k\,\Psi_{y,y}} \right]\, \mathcal{B}_{y,y} \,\mathrm{d} {\xi} \notag \\
&\qquad =\frac{\imath}{k}\,\sum_{h=1}^g \int_U   e^{\imath k\,\Psi_{y,y}} \frac{\partial}{\partial \xi_h} \left[ \mathcal{B}_{y,y}\,\frac{\partial_{\xi_h}\Psi_{y,y}}{\left\lVert \mathrm{d}^{({\xi})}_{(u,g,\theta)}\Psi_{y,y}  \right\rVert^2 } \right]\,\mathrm{d} {\xi}\,. \notag
\end{align}

For any smooth function $f\in \mathcal{C}^{\infty}(U)$, set 
\[P^t(f):= \sum_{h=1}^g \frac{\partial}{\partial \xi_h} \left[ \frac{\partial_{\xi_h}\Psi_{y,y}}{\left\lVert \mathrm{d}^{({\xi})}_{(u,g,\theta)}\Psi_{y,y}  \right\rVert^2 } \,f \right]\,; \]
iterating \eqref{eq:intbyparts} one obtains the following
\begin{equation} \label{eq: iteration}
\int_U e^{\imath\, k\,\Psi_{y,y}}\,\mathcal{B}_{y,y}\, \mathrm{d} {\xi} =\frac{\imath^r}{k^r}\,\int_U e^{\imath\, k\,\Psi_{y,y}}\,(P^t)^r\left(\mathcal{B}_{y,y}\right)\, \mathrm{d} {\xi}\,. \end{equation}
%The proof of Lemma \ref{lem: rapN0} follows easily by Lemma \ref{lem: rapdecph} and by equation \eqref{eq: iteration}.

We shall now study the asymptotics of the oscillatory integral appearing in the right hand side of \eqref{eq: iteration}. We need to introduce some notations. Let us denote with  $$\mathcal{D}=\mathrm{dist}_Y\left(\widetilde{\mu}_{g^{-1}}(r_{\theta}(y)),y\right)\,.$$ 
For $\mathbf{\xi} \sim {0}$, we have
\begin{equation}\label{eq:D}\mathcal{D} = F_1( \xi,\,\theta;\, y) + F_2( \xi,\,\theta;\, y)+\dots\,,\end{equation}
where $F_j( \xi,\,\theta;\, y)$ is homogeneous of degree $j$ in ${\xi}$. Let us set
 \[ \mathcal{D}^{(c)}=\frac{\partial^c\mathcal{D}}{\partial\xi_{i_1}\cdots \partial \xi_{i_c}}\,, \]
where $\mathcal{D}^{(c)}$ is not uniquely determined by $c$. By equation \eqref{eq:D} on the support of \eqref{eq:Byy}, one has
\[ \mathcal{D}^{(c)} = O \left(k^{(c-1)\cdot (1/2-\epsilon)}\right)
\] 
as $k\rightarrow +\infty$, where $\varrho\left(k^{1/2-\epsilon}\,
\mathcal{D}\right) \equiv 1$. Finally, for any multi-index $\mathbf{C} = (c_1, \,\dots ,\, c_s )$, notice that \[\mathcal{D}^{(\mathbf{C})}=\mathcal{D}^{(c_1)}\cdots \mathcal{D}^{(c_s)}=O\left(k^{(1/2-\epsilon')\,\sum_j(c_j-1)} \right)\,. \]

The proof of Lemma \ref{lem: rapN0} follows by Lemma \ref{eq:lemder} below, whose proof we shall omit for the sake of brevity; it is proved by induction in a similar way as Lemma $5.8$ in \cite{gp}.

\begin{lem} \label{eq:lemder}
	For any $r\in \mathbb{N}$, $(P^t)^r\left(\mathcal{B}_{x,x}\right)$ is a linear combination of summand of the form
	\begin{align} \label{eq:summand}
	\varrho^{(b)}\left(k^{1/2-\epsilon'}\,\mathcal{D}_k({\xi}) \right) \, \frac{\mathcal{P}_{a_1}\left(\Psi_{y,y},\,\partial \Psi_{y,y}\right) }{\left\lVert \mathrm{d}^{({\xi})}_{(u,g,\theta)}\Psi_{y,y}  \right\rVert^{2\cdot a_2}}\,k^{b\,(1/2-\epsilon')}\,\mathcal{D}^{(\mathbf{C})}\,,
	\end{align}
	times omitted factors bounded in $k$, where:
	\begin{itemize}
		\item $P_{a_1}$ denotes a generic differential polynomial in $\Psi_{y,y}$, homogeneous of degree $a_1$ in the first derivatives $\partial \Psi_{y,y}$;
		\item if $a:= 2\,a_2-a_1$, then $a,\,b,\,\mathbf{C}$ are subject to the bound 
		\[a+b+\sum_{j=1}^r(c_j-1)\leq 2\,r \]
		(the sum is over $c_j>0$);
		\item $\mathbf{C}$ is not zero if and only if $b>0$.
	\end{itemize}
Here $\varrho^{(l)}$ is the $l$th derivative of the one-variable real function $\varrho$. 
\end{lem}
As $0 < \epsilon' < \epsilon$, the general summand \eqref{eq:summand} is
\[O\left( k^{a(1/2-\epsilon)+[b+\sum_j (c_j-1)](1/2-\epsilon')}\right) = O\left( k^{[a+b+\sum_j (c_j-1)](1/2-\epsilon')}\right)
= O \left(k^{r (1-2\epsilon')}
\right).\]
Making use of \eqref{eq: iteration}, the proof of Lemma \ref{lem: rapN0} is thus complete.
\end{proof}

\end{lem}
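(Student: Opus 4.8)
The plan is to run the stationary-phase/non-stationary-phase dichotomy on the oscillatory integral representation \eqref{eq:Pitilde1} of $\tilde\Pi_{0,k}(y,y)$, with the compactly supported amplitude $\mathcal{B}_{y,y}$ of \eqref{eq:Byy}. The geometric input is the following: writing $n=\tilde\pi(y)$, the hypothesis \eqref{eq: : rapN0} is equivalent to $\mathrm{dist}_N(n,N_0)\ge C\,k^{\epsilon-1/2}$, and since ${0}\in\mathfrak g^{\vee}$ is a regular value of $\tilde\Phi$ and $N_0=\tilde\Phi^{-1}({0})$ is compact, this forces $\lVert\tilde\Phi(n)\rVert\ge C'\,k^{\epsilon-1/2}$ for a suitable $C'>0$. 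I would record this as the first step.

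The second step is a quantitative non-stationarity estimate for the phase in the group directions: on the support of the amplitude in \eqref{eq:Pitilde1}, for $u$ in the (compact, $0$-avoiding) support of $\rho$ and $k\gg0$, one has $\lVert\mathrm{d}^{(\xi)}_{(u,g,\theta)}\Psi_{y,y}\rVert\ge C''\,k^{\epsilon-1/2}$, where $\mathrm{d}^{(\xi)}$ is the differential in the $G$-variable. This is the exact analogue of the computation in Proposition \ref{prop:decrXnuj} (equations \eqref{eq:dpsi}, \eqref{eq:xtheta}, \eqref{eq:varthetadiff}): differentiating $\tilde\mu_{g^{-1}}(r_\theta(y))$ in the direction $\xi$, at $g$ near the identity where $\varrho$ localizes, produces the contact vector field $\xi_Y$, whose $\partial_\theta$-component is $-\langle\tilde\Phi(n),\xi\rangle$ up to $O(k^{\epsilon-1/2})$; pairing against $\mathrm{d}\psi\approx(\alpha_{\cdot},-\alpha_y)$ gives $\partial_{\xi_h}\Psi_{y,y}=u\,\langle\tilde\Phi(n),\xi_h\rangle+O(k^{\epsilon-1/2})$, and the lower bound on $\lVert\tilde\Phi(n)\rVert$ together with $u$ bounded below yields the claim.

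The third step is the integration by parts. Passing to an exponential chart $\exp_G:U\subseteq\mathfrak g\to G$ around the identity, one introduces the first-order operator $P=\sum_{h}\big(\partial_{\xi_h}\Psi_{y,y}/\lVert\mathrm{d}^{(\xi)}_{(u,g,\theta)}\Psi_{y,y}\rVert^{2}\big)\,\partial/\partial\xi_h$, which satisfies $(\imath k)^{-1}P\big(e^{\imath k\Psi_{y,y}}\big)=e^{\imath k\Psi_{y,y}}$, and iterates: after $r$ steps $\tilde\Pi_{0,k}(y,y)$ equals $\imath^{r}k^{1-r}$ times $\int e^{\imath k\Psi_{y,y}}\,(P^t)^{r}(\mathcal B_{y,y})$. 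The last step is to show this survives the loss of powers of $k$ incurred by the repeated differentiations. For this I would prove, by induction on $r$ in the spirit of Lemma $5.8$ of \cite{gp}, a structural statement: every summand of $(P^t)^{r}(\mathcal B_{y,y})$ is of the form $\varrho^{(b)}(k^{1/2-\epsilon'}\mathcal D)\cdot \mathcal P_{a_1}(\Psi_{y,y},\partial\Psi_{y,y})\cdot\lVert\mathrm{d}^{(\xi)}\Psi_{y,y}\rVert^{-2a_2}\cdot k^{b(1/2-\epsilon')}\,\mathcal D^{(\mathbf C)}$ times bounded factors, with $\mathcal D=\mathrm{dist}_Y\big(\tilde\mu_{g^{-1}}(r_\theta(y)),y\big)$, subject to $a+b+\sum_j(c_j-1)\le 2r$ where $a=2a_2-a_1$. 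Using $\lVert\mathrm{d}^{(\xi)}\Psi_{y,y}\rVert^{-2}=O(k^{1-2\epsilon})$ and, on the annulus where $\varrho$ is differentiated, $\mathcal D^{(\mathbf C)}=O(k^{(1/2-\epsilon')\sum_j(c_j-1)})$, each summand is $O(k^{r(1-2\epsilon')})$ for any fixed $\epsilon'\in(0,\epsilon)$; hence $\tilde\Pi_{0,k}(y,y)=O(k^{1-2r\epsilon'})$ for every $r$, uniformly over $y$ obeying \eqref{eq: : rapN0}, i.e. $O(k^{-\infty})$. The main obstacle is exactly this last bookkeeping — checking that differentiating the large denominators $\lVert\mathrm{d}^{(\xi)}\Psi_{y,y}\rVert^{-2}$ and the rescaled cut-off $\varrho(k^{1/2-\epsilon}\mathcal D)$ never outweighs the $k^{-1}$ gained per integration by parts — which is precisely what the weight inequality encodes; establishing the uniform phase lower bound of step two on the full support is the other delicate point.
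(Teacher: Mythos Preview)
Your proposal is correct and follows the paper's own proof essentially verbatim: the same reduction from \eqref{eq: : rapN0} to $\lVert\tilde\Phi(n)\rVert\ge C'k^{\epsilon-1/2}$, the same phase lower bound in the $G$-direction (the paper's Lemma~\ref{lem: rapdecph}), the same operator $P$ in exponential coordinates and iterated integration by parts, and the same structural bookkeeping on $(P^t)^r(\mathcal B_{y,y})$ via the inductive Lemma~\ref{eq:lemder} modeled on Lemma~5.8 of \cite{gp}. The only cosmetic difference is that you fold the overall factor $k$ from \eqref{eq:Pitilde1} into the final bound $O(k^{1-2r\epsilon'})$, whereas the paper keeps the $k^{-r}$ from \eqref{eq: iteration} separate; in either case one should also note that the amplitude $\tilde s\sim(ku)^{\mathrm n}$ contributes only a fixed polynomial factor, which is harmless since $r$ is arbitrary.
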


In the next section we prove Theorem \ref{cor:dimension estimate} making use of the asymptotics expansion appearing in Theorem \ref{thm:pao}. 

\subsection{Proof of Theorem \ref{cor:dimension estimate}}

\begin{proof}[Proof of Theorem \ref{cor:dimension estimate}]
Let us notice that $\tilde{\Phi}^{-1}({0})$ is the locus 
\[N_0:=\{(m,\,f)\in M\times \mathcal{O}_{{\nu}}\,:\, \Phi(m)=f  \}\subseteq N\,.\]
Under the assumptions of Theorem \ref{thm:pao}, $N_0$ is a compact sub-manifold of $N$, of (real) co-dimension $\mathrm{g}$. Thus the locus on which the kernel $\tilde{\Pi}_{0,\,k}$ localizes is $Y_0:=\tilde{\pi}^{-1}(N_0)$. Let $\mathcal{T} \subset Y$ be a tubular neighborhood of $Y_0$. Thus, we have
\begin{align*}
\dim H_k(Y)_{0} \sim \int_{\mathcal{T}} \tilde{\Pi}_{0,\,k}(y,y)\,\mathrm{dV}_Y (y)\,.
\end{align*} 

For any $y\in Y_0$, we have $T_yY = T_yY_0 \oplus T_y^tY$, where $ T_y^tY$ %$$=\pi^{-1}\left(\tilde{J}_n\left(\mathfrak{g}_N(n)\right)\right)$$
is the transversal subspace to $T_yY_0$ in $T_yY$. The vector space $\tilde{J}_n\left(\mathfrak{g}_N(n)\right)$ is naturally unitarily isomorphic to the transversal space to $Y_0$ (see the Introduction and \cite{pao-jsg0}). 

Furthermore, locally along $Y_{0}$, for some sufficiently small $\delta > 0$ we can parametrize the locus $\mathcal{T}$ by a diffeomorphism $\Gamma$ such that
\[\Gamma\,:\, Y_{0}\times  B_{\mathrm{g}}({0},\,2\,\delta)\rightarrow \mathcal{T},\,\qquad \left(y,\,\mathbf{v} \right) \mapsto y+\mathbf{v} \in Y\,. \]
The latter expression is meant in terms of a collection of smoothly varying systems of Heisenberg local coordinates centered at $y \in Y_{0}$, locally defined along $Y_{0}$.
% (to be precise, one ought to work locally on $X_{{ \nu },\, j}$, introduce an appropriate open cover of $X_{{\nu} ,\, j}$, and a subordinate partition of unity; however for the sake of exposition we shall omit details on this).
We shall set $y_{\mathbf{v}} := \Gamma(y,\,\mathbf{v} )$, and write
\[\Gamma^*(\mathrm{dV}_Y )=\mathcal{V}_Y(y,\,\mathbf{v})\,\mathrm{dV}_{Y_{0}}(y)\,\mathrm{d} \mathcal{L}(\mathbf{v}), \]
where $\mathcal{V}_Y\,:\,Y_{0}\times B_{\mathrm{g}}({0},\,2\,\delta) \rightarrow (0,\,+\infty)$ is $\mathcal{C}^{\infty}$.
Hence, we obtain
\begin{align} \label{eq:dim}
\dim H_k(Y)_{0}   \sim \int_{Y_{0}}\mathrm{dV}_{Y_L}(y)\, \int_{ B_g({0},\,2\,\delta)}\,\mathrm{d}\mathcal{L}(\mathbf{v})\,\left[\mathcal{V}_Y(y,\,\mathbf{v})\,\tilde{\Pi}_{0,\,k}(y_{\mathbf{v}},\,y_{\mathbf{v}})\right] \notag
\end{align}

The asymptotics of $\dim H(X)_{(k{\nu},\,k)} $ are unchanged, if the integrand is multiplied by a rescaled cut-off function $\varrho$, where $\varrho$ is identically one sufficiently near the origin in $\mathbb{R}^{\mathrm{g}}$, and vanishes outside a slightly larger neighborhood. Thus, by Lemma \ref{lem: rapN0}, we obtain
\[\dim H_k(Y)_{0}\sim \int_{Y_{0}}\,\mathcal{H}_k(y)\,\mathrm{dV}_{Y_0}(y) \,, \]
where we have set 
\begin{align} \label{eq:Hk}
\mathcal{H}_k(y):= \int_{ B_g({0},\,2\,\delta)}\mathrm{d} \mathcal{L}(\mathbf{v})\,\left[\varrho\left(k^{1/2-\epsilon}\,\tau\right)\,\mathcal{V}_Y\left(y,\,\mathbf{v}\right)\,\tilde{\Pi}_{0,\,k}(y_{\mathbf{v}},\,y_{\mathbf{v}})\right]\,. \end{align}

Rescaling $\mathbf{v} \mapsto \mathbf{v}/\sqrt{k}$ and noticing that the complex dimension of $N$ is $\mathrm{n}=\mathrm{d}+(\mathrm{g}-\mathrm{t})/2$, we can insert in \eqref{eq:Hk} the asymptotic expansion of Theorem~\ref{thm:pao}, we obtain
\begin{align*} 
 &\left(\frac{k}{\pi}\right)^{\mathrm{d}+(\mathrm{g}-\mathrm{t})/2-\mathrm{g}/2}\,\frac{1}{k^{\mathrm{g}/2}\,V_{\mathrm{eff}}(y)}\,\int_{\mathbb{R}^g}\mathrm{d}\mathcal{L}(\mathbf{v}) \,\left[\varrho(k^{-\epsilon}\,\mathbf{v})\,\mathcal{V}_Y\left(y,\,\frac{\mathbf{v}}{\sqrt{k}}\right)\,e^{-2\,\tau^2\,\lVert \mathbf{v}\rVert^2}\,\right]\, \\
 &\qquad \sim \left(\frac{k}{\pi}\right)^{\mathrm{d}+(\mathrm{g}-\mathrm{t})/2-\mathrm{g}}\,\frac{1}{V_{\mathrm{eff}}(y)}+O(k^{\mathrm{d}-(\mathrm{g}+\mathrm{t})/2-1/2})\,. \end{align*}

Hence, we have
\[ \dim H_k(Y)_{0}\sim \left(\frac{k}{\pi}\right)^{\mathrm{d}-(\mathrm{g}+\mathrm{t})/2}\cdot\int_{N_{0}}\frac{1}{V_{\mathrm{eff}}(n)}\, \mathrm{dV}_{N_{0}}(n)+O(k^{\mathrm{d}-(\mathrm{g}+\mathrm{t})/2-1/2})\,. \]
By the definition of $V_{\mathrm{eff}}$, we have $V_{\mathrm{eff}}\,\mathrm{dV}_{M_{\mathrm{red}}}=\mathrm{dV}_{N_{0}}$. Furthermore, making use of Weyl Dimension Formula (see \cite{var}, pg. $32$) notice that $$\dim(V_{k\,{\nu}})= k^{(\mathrm{g}-\mathrm{t})/2}\dim(V_{{\nu}})\,.$$ 
Finally, by \eqref{eq:gs2} we obtain the desired result.
\end{proof}

\end{document}